\newcommand{\dd}{{\rm d}}
\numberwithin{equation}{section} 
\numberwithin{theorem}{section} 
\begin{document}

\title{The Riemann problem for the multidimensional isentropic system of gas dynamics is ill-posed if it contains a shock
}

\titlerunning{The Riemann problem for the multidimensional isentropic system of gas dynamics is ill-posed}        

\author{Simon Markfelder \and 
	Christian Klingenberg 
}


\institute{S. Markfelder \and C. Klingenberg \at 
              Dept. of Mathematics, W{\"u}rzburg University, Emil-Fischer-Str. 40, 97074 W{\"u}rzburg, Germany \\
              \email{simon.markfelder@mathematik.uni-wuerzburg.de} \\ 
              \email{klingen@mathematik.uni-wuerzburg.de}          
}

\date{Received: date / Accepted: date}

\maketitle

\begin{abstract}
In this paper we consider the isentropic compressible Euler equations in two space dimensions together with particular initial data. This data consists of two constant states, where one state lies in the lower and the other state in the upper half plane. The aim is to investigate if there exists a unique entropy solution or if the convex integration method produces infinitely many entropy solutions. For some initial states this question has been answered by E. Feireisl and O. Kreml \cite{feir15}, and also G.-Q. Chen and J. Chen \cite{chen}, where there exists a unique entropy solution. For other initial states E. Chiodaroli, O. Kreml and C. De Lellis, \cite{chio14} and \cite{chio15}, showed that there are infinitely many entropy solutions. For still other initial states the question on uniqueness remained open and this will be the content of this paper. This paper can be seen as a completion of the aforementioned papers by showing that the solution is non-unique in all cases (except if the solution is smooth).
\keywords{compressible Euler equations \and convex integration \and non-uniqueness} 
\subclass{35L65 \and 35Q31 \and 76N15} 
\end{abstract}

\section{Introduction} 

\subsection{Basic notions}

We consider the $2$-dimensional isentropic compressible Euler equations 
\begin{equation}
\begin{split}
\partial_t \varrho + \text{div}_x (\varrho\,v) &= 0, \\
\partial_t (\varrho\,v) + \text{div}_x(\varrho\,v\otimes v)+\nabla_x\left[p(\varrho)\right] &= 0,
\end{split}
\label{eq:euler}
\end{equation}
where the density $\varrho=\varrho(t,x)\in\mathbb{R}^+$ and the velocity $v=v(t,x)\in\mathbb{R}^2$ are functions of the time $t\in[0,\infty)$ and the position $x=(x_1,x_2)\in\mathbb{R}^2$. 

Additionally we consider the polytropic pressure law $p(\varrho)=K\,\varrho^\gamma$ with a constant $K\in\mathbb{R}^+$ and the adiabatic coefficient $\gamma\geq1$. In particular $p''(\varrho)\geq 0$ for all $\varrho>0$, i.e. $p$ is a convex function. 

We are interested in solutions to the Cauchy problem consisting of the Euler system \eqref{eq:euler} and the initial data
\begin{equation}
\begin{split}
\varrho(0,x) &= \varrho_0(x), \\
v(0,x) &= v_0(x).
\end{split}
\label{eq:initial}
\end{equation}

First we will clarify what we understand by the notion ``solution''.
\begin{definition} \emph{(weak solution)}
	A weak solution to the Cauchy problem \eqref{eq:euler}, \eqref{eq:initial} is a pair of functions $(\varrho,v)\in L^\infty([0,\infty)\times\mathbb{R}^2,\mathbb{R}^+\times\mathbb{R}^2)$ such that for all test functions $(\psi,\phi)\in C_c^\infty([0,\infty)\times\mathbb{R}^2,\mathbb{R}\times\mathbb{R}^2)$ the following identities hold:
	\begin{align*}
	\int_0^\infty\int_{\mathbb{R}^2}\big(\varrho\,\partial_t\psi + \varrho\,v\cdot\nabla_x\psi\big)\dd x\,\dd t + \int_{\mathbb{R}^2} \varrho_0(x)\,\psi(0,x)\,\dd x \ &=\ 0, \\
	\int_0^\infty\int_{\mathbb{R}^2}\big(\varrho\,v\cdot\partial_t\phi + \varrho\,v\otimes v:D_x\phi + p(\varrho)\,\text{div}_x\phi\big)\dd x\,\dd t\qquad\quad& \\
	+ \int_{\mathbb{R}^2} \varrho_0(x)\,v_0(x)\cdot\phi(0,x)\,\dd x\ &=\ 0. 
	\end{align*}
	\label{defn:weak}
\end{definition}

Let $\varepsilon$ denote the internal energy which is given by $p(\varrho)=\varrho^2\,\varepsilon'(\varrho)$. In the case of polytropic pressure law one gets $\varepsilon(\varrho)=\frac{K\,\varrho^{\gamma-1}}{\gamma-1}$ if $\gamma>1$ and $\varepsilon(\varrho)=K\,\log(\varrho)$ if $\gamma=1$.

\begin{definition} \emph{(admissible weak solution or entropy solution)}
	A weak solution is admissible if for every non-negative test function $\varphi\in C_c^\infty([0,\infty)\times\mathbb{R}^2,\mathbb{R}^+_0)$ the following inequality is fulfilled:
	\begin{align*}
	\int_0^\infty\int_{\mathbb{R}^2}\Bigg(\bigg(\varrho\,\varepsilon(\varrho)+\varrho\frac{|v|^2}{2}\bigg)\,\partial_t\varphi + \bigg(\varrho\,\varepsilon(\varrho)+\varrho\,\frac{|v|^2}{2}+p(\varrho)\bigg)v\cdot\nabla_x \varphi\Bigg)\dd x\,&\dd t \\
	+ \int_{\mathbb{R}^2} \bigg(\varrho_0(x)\,\varepsilon(\varrho_0(x))+\varrho_0(x)\,\frac{|v_0(x)|^2}{2}\bigg)\varphi(0,x)\,\dd x\ &\geq\ 0.
	\end{align*}
	\label{defn:adm}
\end{definition}

\subsection{Initial data considered in this paper} 

We consider initial data of the following type, namely
\begin{equation}
\begin{split}
\varrho(0,x) &= \varrho_0(x):=\left\{
\begin{array}[c]{ll}
\varrho_- & \text{ if }x_2<0 \\
\varrho_+ & \text{ if }x_2>0
\end{array}
\right. ,\\
v(0,x) &= v_0(x):=\left\{
\begin{array}[c]{ll}
v_- & \text{ if }x_2<0 \\
v_+ & \text{ if }x_2>0
\end{array}
\right. ,
\end{split}
\label{eq:ourinit}
\end{equation}
where $\varrho_\pm\in\mathbb{R}^+$ and $v_\pm\in\mathbb{R}^2$ are constants. We denote the components of the velocities as $v_-=(v_{-\,1},v_{-\,2})^T$, resp. $v_+=(v_{+\,1},v_{+\,2})^T$. Furthermore we suppose that $v_{-\,1}=v_{+\,1}$, which means that the component of the velocity which is parallel to the discontinuity is equal on both sides of the discontinuity. In other words the problem under consideration is a one-dimensional Riemann problem extended to two dimensions.
\begin{figure}[hbt]
	\centering
	\includegraphics[width=0.6\textwidth]{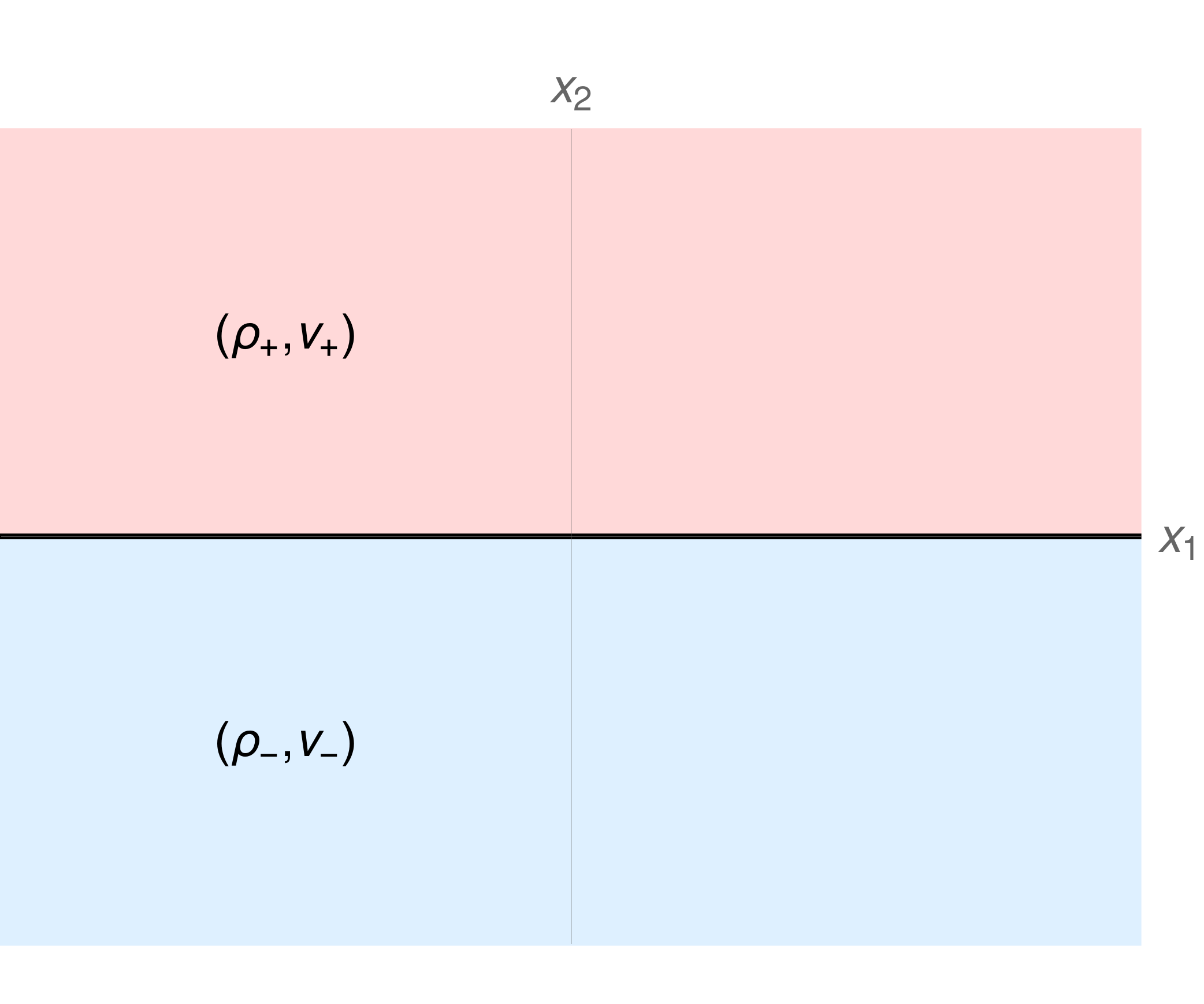}
	\caption{Initial data considered in this paper}
	\label{fig:ourinit}
\end{figure}
The initial data is illustrated in figure \ref{fig:ourinit}. 

Solving the one-dimensional Riemann problem that corresponds to problem \eqref{eq:euler}, \eqref{eq:ourinit} and extending the solution to two space dimensions yields an admissible weak solution to the two-dimensional problem \eqref{eq:euler}, \eqref{eq:ourinit}. We will denote this solution as \emph{standard solution}. 

\begin{proposition} \emph{(see \cite[Lemma 2.4]{chio14})}
	Let $\varrho_\pm\in\mathbb{R}^+$ and $v_\pm\in\mathbb{R}^2$ be given constants, where $v_{-\,1}=v_{+\,1}$. 
	\begin{enumerate}
		\item If
		\begin{equation*}
		v_{+\,2} - v_{-\,2} \geq \int_0^{\varrho_-}\frac{\sqrt{p'(r)}}{r}\,\dd r + \int_0^{\varrho_+}\frac{\sqrt{p'(r)}}{r}\,\dd r,
		\end{equation*}
		then the standard solution to the problem \eqref{eq:euler}, \eqref{eq:ourinit} consists of a 1-rarefaction and a 3-rarefaction. The intermediate state $(\varrho_M,v_{M\,1},v_{M\,2})$ is a vacuum state, i.e. $\varrho_M=0$.
		\item If 
		\begin{equation*}
		\bigg|\int_{\varrho_-}^{\varrho_+}\frac{\sqrt{p'(r)}}{r}\,\dd r\bigg| < v_{+\,2} - v_{-\,2} < \int_0^{\varrho_-}\frac{\sqrt{p'(r)}}{r}\,\dd r + \int_0^{\varrho_+}\frac{\sqrt{p'(r)}}{r}\,\dd r,
		\end{equation*}
		then the standard solution to the problem \eqref{eq:euler}, \eqref{eq:ourinit} consists of a 1-rarefaction and a 3-rarefaction. The intermediate state $(\varrho_M,v_{M\,1},v_{M\,2})$ is given by 
		\begin{align*}
		\varrho_M&<\min\{\varrho_-,\varrho_+\}, \\
		v_{+\,2} - v_{-\,2} &= \int_{\varrho_M}^{\varrho_-}\frac{\sqrt{p'(r)}}{r}\,\dd r + \int_{\varrho_M}^{\varrho_+}\frac{\sqrt{p'(r)}}{r}\,\dd r, \\
		v_{M\,1} &= v_{-\,1} = v_{+\,1}, \\
		v_{M\,2} &= v_{-\,2} + \int_{\varrho_M}^{\varrho_-}\frac{\sqrt{p'(r)}}{r}\,\dd r.
		\end{align*}
		\item If
		\begin{equation*}
		\bigg|\int_{\varrho_-}^{\varrho_+}\frac{\sqrt{p'(r)}}{r}\,\dd r\bigg| = v_{+\,2} - v_{-\,2},
		\end{equation*}
		then the standard solution to the problem \eqref{eq:euler}, \eqref{eq:ourinit} consists of one rarefaction. More precisely this rarefaction is a 1-rarefaction if $\varrho_->\varrho_+$ and a 3-rarefaction if $\varrho_-<\varrho_+$.
		\item If $\varrho_->\varrho_+$ and
		\begin{equation*}
		-\sqrt{\frac{\big(\varrho_- - \varrho_+\big)\,\big(p(\varrho_-)-p(\varrho_+)\big)}{\varrho_-\,\varrho_+}} < v_{+\,2} - v_{-\,2} < \int_{\varrho_+}^{\varrho_-}\frac{\sqrt{p'(r)}}{r}\,\dd r,
		\end{equation*}
		then the standard solution to the problem \eqref{eq:euler}, \eqref{eq:ourinit} consists of a 1-rarefaction and a 3-shock. The intermediate state $(\varrho_M,v_{M\,1},v_{M\,2})$ is given by 
		\begin{align*}
		\varrho_+&<\varrho_M<\varrho_-, \\
		v_{+\,2} - v_{-\,2} &= \int_{\varrho_M}^{\varrho_-}\frac{\sqrt{p'(r)}}{r}\,\dd r - \sqrt{\frac{\big(\varrho_M - \varrho_+\big)\,\big(p(\varrho_M) - p(\varrho_+)\big)}{\varrho_M\,\varrho_+}}, \\
		v_{M\,1} &= v_{-\,1} = v_{+\,1}, \\
		v_{M\,2} &= v_{-\,2} + \int_{\varrho_M}^{\varrho_-}\frac{\sqrt{p'(r)}}{r}\,\dd r.
		\end{align*}
		\item If $\varrho_-<\varrho_+$ and
		\begin{equation*}
		-\sqrt{\frac{\big(\varrho_- - \varrho_+\big)\,\big(p(\varrho_-)-p(\varrho_+)\big)}{\varrho_-\,\varrho_+}} < v_{+\,2} - v_{-\,2} < \int_{\varrho_-}^{\varrho_+}\frac{\sqrt{p'(r)}}{r}\,\dd r,
		\end{equation*}
		then the standard solution to the problem \eqref{eq:euler}, \eqref{eq:ourinit} consists of a 1-shock and a 3-rarefaction. The intermediate state $(\varrho_M,v_{M\,1},v_{M\,2})$ is given by 
		\begin{align*}
		\varrho_-&<\varrho_M<\varrho_+, \\
		v_{+\,2} - v_{-\,2} &= 	\int_{\varrho_M}^{\varrho_+}\frac{\sqrt{p'(r)}}{r}\,\dd r - \sqrt{\frac{\big(\varrho_M - \varrho_-\big)\,\big(p(\varrho_M) - p(\varrho_-)\big)}{\varrho_M\,\varrho_-}}, \\
		v_{M\,1} &= v_{-\,1} = v_{+\,1}, \\
		v_{M\,2} &= v_{-\,2} - \sqrt{\frac{\big(\varrho_M - \varrho_-\big)\,\big(p(\varrho_M) - p(\varrho_-)\big)}{\varrho_M\,\varrho_-}}.
		\end{align*}
		\item If 
		\begin{equation*}
		v_{+\,2} - v_{-\,2} = -\sqrt{\frac{\big(\varrho_- - \varrho_+\big)\,\big(p(\varrho_-)-p(\varrho_+)\big)}{\varrho_-\,\varrho_+}},
		\end{equation*}
		then the standard solution to the problem \eqref{eq:euler}, \eqref{eq:ourinit} consists of one shock. More precisely this shock is a 1-shock if $\varrho_-<\varrho_+$ and a 3-shock if $\varrho_->\varrho_+$.
		\item If 
		\begin{equation*}
		v_{+\,2} - v_{-\,2} < -\sqrt{\frac{\big(\varrho_- - \varrho_+\big)\,\big(p(\varrho_-)-p(\varrho_+)\big)}{\varrho_-\,\varrho_+}},
		\end{equation*}
		then the standard solution to the problem \eqref{eq:euler}, \eqref{eq:ourinit} consists of a 1-shock and a 3-shock. The intermediate state $(\varrho_M,v_{M\,1},v_{M\,2})$ is given by 
		\begin{align*}
		\varrho_M&>\max\{\varrho_-,\varrho_+\}, \\
		v_{+\,2} - v_{-\,2} &= - \sqrt{\frac{\big(\varrho_M - \varrho_+\big)\,\big(p(\varrho_M) - p(\varrho_+)\big)}{\varrho_M\,\varrho_+}} - \sqrt{\frac{\big(\varrho_M - \varrho_-\big)\,\big(p(\varrho_M) - p(\varrho_-)\big)}{\varrho_M\,\varrho_-}}, \\
		v_{M\,1} &= v_{-\,1} = v_{+\,1}, \\
		v_{M\,2} &= v_{-\,2} - \sqrt{\frac{\big(\varrho_M - \varrho_-\big)\,\big(p(\varrho_M) - p(\varrho_-)\big)}{\varrho_M\,\varrho_-}}.
		\end{align*}
	\end{enumerate}
	In each case the standard solution is admissible.
	\label{prop:standardsolution}
\end{proposition}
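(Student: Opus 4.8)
The plan is to reduce the two-dimensional problem to a one-dimensional Riemann problem and then to invoke the classical wave-curve construction for genuinely nonlinear systems. First I would observe that because the initial data \eqref{eq:ourinit} depends only on $x_2$ and satisfies $v_{-\,1}=v_{+\,1}$, one may look for a self-similar solution $(\varrho,v)(t,x)=(\varrho,v)(x_2/t)$ in which all $\partial_{x_1}$-fluxes in \eqref{eq:euler} drop out. The resulting system in the unknowns $(\varrho,\varrho v_1,\varrho v_2)$ is a genuinely one-dimensional system of three conservation laws in the $x_2$-direction, whose characteristic speeds are $v_2-c$, $v_2$ and $v_2+c$ with sound speed $c=\sqrt{p'(\varrho)}$. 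The outer two fields are genuinely nonlinear, while the middle field is linearly degenerate and merely transports the tangential velocity $v_1$. Since $v_{-\,1}=v_{+\,1}$, the $2$-wave has zero strength, so $v_1\equiv v_{-\,1}$ throughout and only $1$- and $3$-waves can occur; this is exactly why every case of the proposition involves solely $1$- and $3$-rarefactions and $1$- and $3$-shocks.

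Next I would set up the two wave curves in the $(\varrho,v_2)$-plane. Writing $\mathcal{W}_1(\varrho_M)$ for the value of $v_2$ reached from the left state by an admissible $1$-wave and $\mathcal{W}_3(\varrho_M)$ for the value reached from the right state by an admissible $3$-wave, the rarefaction branches are governed by constancy of the Riemann invariants $v_2\pm\int^\varrho\frac{\sqrt{p'(r)}}{r}\,\dd r$, which produces the integral terms $\int_{\varrho_M}^{\varrho_\pm}\frac{\sqrt{p'(r)}}{r}\,\dd r$ in the statement. The shock branches are governed by the Rankine--Hugoniot relations for \eqref{eq:euler}, which after eliminating the shock speed yield $(v_2-v_{\pm\,2})^2=\frac{(\varrho_M-\varrho_\pm)(p(\varrho_M)-p(\varrho_\pm))}{\varrho_M\,\varrho_\pm}$, i.e. exactly the square-root terms appearing in the cases. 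The Lax admissibility condition then selects the compressive branch: the $1$-wave is a rarefaction iff $\varrho_M<\varrho_-$ and a shock iff $\varrho_M>\varrho_-$, and analogously the $3$-wave relative to $\varrho_+$.

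The core of the argument is a monotonicity statement. Writing $\mathcal{W}_1(\varrho_M)=v_{-\,2}+g_1(\varrho_M)$ and $\mathcal{W}_3(\varrho_M)=v_{+\,2}+g_3(\varrho_M)$, I would show that $g_1$ is strictly decreasing and $g_3$ strictly increasing in $\varrho_M$, each continuous across its rarefaction--shock junction at $\varrho_\pm$, so that $F(\varrho_M):=g_1(\varrho_M)-g_3(\varrho_M)$ is strictly decreasing. The intermediate state is determined by $v_{+\,2}-v_{-\,2}=F(\varrho_M)$, which therefore has a unique solution $\varrho_M$ whenever the left-hand side lies in the range of $F$. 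Evaluating $F$ at the distinguished densities $\varrho_M=\min\{\varrho_-,\varrho_+\}$ and $\varrho_M=\max\{\varrho_-,\varrho_+\}$, and in the limit $\varrho_M\to0^+$, reproduces precisely the threshold quantities of cases 1--7, while the sign of $\varrho_M-\varrho_\pm$ dictates whether each wave is a shock or a rarefaction. The vacuum case 1 corresponds to $v_{+\,2}-v_{-\,2}$ reaching or exceeding $\sup F=\int_0^{\varrho_-}\frac{\sqrt{p'(r)}}{r}\,\dd r+\int_0^{\varrho_+}\frac{\sqrt{p'(r)}}{r}\,\dd r$, these integrals being finite exactly when $\gamma>1$.

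Finally I would verify that the resulting self-similar profile is an admissible weak solution in the sense of Definitions \ref{defn:weak} and \ref{defn:adm}: the rarefaction parts are Lipschitz and satisfy the entropy balance with equality, while across each shock the Rankine--Hugoniot conditions give a weak solution, and the Lax/compressive condition together with the convexity of $p$ yields the entropy dissipation inequality required by Definition \ref{defn:adm}. I expect the main obstacle to be twofold: the monotonicity of the shock branch, which reduces to showing $\frac{\dd}{\dd\varrho_M}\sqrt{\frac{(\varrho_M-\varrho_\pm)(p(\varrho_M)-p(\varrho_\pm))}{\varrho_M\,\varrho_\pm}}>0$ and genuinely uses that $p$ is increasing and convex; and the verification that the compressive shock dissipates the mechanical energy, which again hinges on convexity of $p$. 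The remaining sign bookkeeping needed to confirm that each threshold and inequality in cases 1--7 comes out with the stated orientation is routine, which is why the result may be attributed to \cite[Lemma 2.4]{chio14}.
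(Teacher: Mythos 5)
Your proposal is correct and follows essentially the same route as the paper, which itself only sketches the argument by reducing to the one-dimensional Riemann problem in the $x_2$-direction and then invoking the classical wave-curve construction from the textbooks of Dafermos and LeVeque (and \cite[Lemma 2.4]{chio14}). You simply spell out the details the paper delegates to those references — the degenerate contact field carrying $v_1$, the Riemann invariants and Hugoniot loci, and the monotone intersection of the two wave curves — all of which match the intended proof.
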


\begin{proof}
	We don't want to present the whole proof here, but we are going to say few words about it. We have to solve the one-dimensional Riemann problem
	\begin{equation}
	\begin{split}
	\partial_t \varrho + \partial_{x_2}(\varrho\,v_2) &= 0, \\
	\partial_t (\varrho\,v_1) + \partial_{x_2}(\varrho\,v_1\,v_2) &= 0, \\
	\partial_t (\varrho\,v_2) + \partial_{x_2}\big(\varrho\,v_2^2 + p(\varrho)\big) &= 0,
	\end{split}
	\label{eq:1deuler}
	\end{equation}
	\begin{equation}
	\begin{split}
	\varrho(0,x_2) &= \left\{
	\begin{array}[c]{ll}
	\varrho_- & \text{ if }x_2<0 \\
	\varrho_+ & \text{ if }x_2>0
	\end{array}
	\right. ,\\
	v(0,x_2) &= \left\{
	\begin{array}[c]{ll}
	v_- & \text{ if }x_2<0 \\
	v_+ & \text{ if }x_2>0
	\end{array}
	\right. ,
	\end{split}
	\label{eq:1dinit}
	\end{equation}
	where the unknowns $\varrho=\varrho(t,x_2)\in\mathbb{R}^+$ and $v=v(t,x_2)\in\mathbb{R}^2$ are now functions of the time $t\in[0,\infty)$ and the position $x_2\in\mathbb{R}$. Additionally we want the following admissibility condition to be true
	\begin{equation}
	\partial_t\bigg(\varrho\,\varepsilon(\varrho) + \varrho\,\frac{|v|^2}{2}\bigg) + \partial_{x_2}\Bigg(\bigg(\varrho\,\varepsilon(\varrho) + \varrho\,\frac{|v|^2}{2} + p(\varrho)\bigg)v_2\Bigg) \leq 0.
	\label{eq:1dadm}
	\end{equation}
	
	It is well-known that there exists a weak solution to \eqref{eq:1deuler}, \eqref{eq:1dinit}, \eqref{eq:1dadm} which consists of shocks, rarefactions and contact discontinuities. By well-known methods (see textbooks, e.g. the ones by C. Dafermos \cite[Chapters 7 - 9]{dafermos} or R. LeVeque \cite[Chapters 13, 14]{leveque}) one can compute this solution and one ends up with the seven cases in proposition \ref{prop:standardsolution}. Parts of proposition \ref{prop:standardsolution} together with a proof can be found in \cite[Lemma 2.4]{chio14}, too. \qed
\end{proof}

The aim is to check if the standard solution is unique or if there are other admissible weak solutions. This question on uniqueness concerning admissible weak solutions to problem \eqref{eq:euler}, \eqref{eq:ourinit} has been discussed in previous papers. The results are summarized in the following theorem.

\begin{theorem}
	Let $\varrho_\pm\in\mathbb{R}^+$ and $v_\pm\in\mathbb{R}^2$ be given constants, where $v_{-\,1}=v_{+\,1}$. The following table summarizes the results on uniqueness of admissible weak solutions. In the cases where the solution is not unique, there are even infinitely many admissible weak solutions.
	\renewcommand{\arraystretch}{1.5}
	\begin{table}[h] 
		\caption{Results on uniqueness of the standard solution to problem \eqref{eq:euler}, \eqref{eq:ourinit}}
		\centering
		\begin{tabular}{|m{0.18\linewidth}|m{0.16\linewidth}|m{0.17\linewidth}|m{0.19\linewidth}|} \hline
			\centering Standard solution consists of & \centering Case in proposition \ref{prop:standardsolution} & \centering Solution unique? & \centering Reference \tabularnewline \hline \hline
			\centering two rarefactions with vacuum & \centering 1 & \centering yes & \centering \cite{chen} \tabularnewline \hline 
			\centering two rarefactions, no vacuum & \centering 2 & \centering yes & \centering \cite{chen}, \cite{feir15} \tabularnewline \hline
			\centering one rarefaction & \centering 3 & \centering yes & \centering \cite{chen}, \cite{feir15} \tabularnewline \hline
			\centering 1-rarefaction, 3-shock & \centering 4 & \centering no & \centering Theorem \ref{thm:SRexistence}, one example in \cite{chio15} \tabularnewline \hline
			\centering 1-shock, 3-rarefaction & \centering 5 & \centering no & \centering Theorem \ref{thm:SRexistence}, one example in \cite{chio15} \tabularnewline \hline
			\centering one shock & \centering 6 & \centering no & \centering Theorem \ref{thm:Sexistence} \tabularnewline \hline
			\centering two shocks & \centering 7 & \centering no & \centering \cite{chio14} \tabularnewline \hline
		\end{tabular}
	\end{table}
	\renewcommand{\arraystretch}{1}
	\label{thm:summaryriemann}
\end{theorem}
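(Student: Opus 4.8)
The plan is to treat Theorem \ref{thm:summaryriemann} as a bookkeeping statement that assembles the known and new results along the rows of its table, arguing case by case in line with the seven cases of Proposition \ref{prop:standardsolution}.

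For the rarefaction cases (cases 1--3) I would invoke the uniqueness results of Chen--Chen \cite{chen} and Feireisl--Kreml \cite{feir15}. The reason uniqueness holds is that when the standard solution consists only of rarefactions (possibly with vacuum) it is locally Lipschitz on $(0,\infty)\times\mathbb{R}^2$ in the self-similar variable, so a weak--strong (relative energy) argument forces every admissible weak solution with the same data to coincide with it; the vacuum subcase (case 1) is covered by \cite{chen}. For two shocks (case 7) I would cite Chiodaroli, Kreml and De Lellis \cite{chio14}, where convex integration was first used to produce infinitely many admissible weak solutions for precisely this configuration.

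The genuinely new content is cases 4, 5 and 6, the configurations that contain a shock but are not handled above. A single example in the mixed shock--rarefaction situation already appears in \cite{chio15}; the assertion that \emph{every} such datum yields non-uniqueness is the object of Theorem \ref{thm:SRexistence} (for the 1-rarefaction/3-shock case 4 and the symmetric 1-shock/3-rarefaction case 5) and of Theorem \ref{thm:Sexistence} (for the single-shock case 6), both proved later in the paper. With these in hand, the proof of Theorem \ref{thm:summaryriemann} reduces to citing \cite{chen,feir15} for uniqueness, \cite{chio14} for case 7, and Theorems \ref{thm:SRexistence} and \ref{thm:Sexistence} for the remaining non-uniqueness.

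The real obstacle is thus the proof of these two new theorems, which rests on the fan-subsolution form of the De Lellis--Sz\'ekelyhidi convex integration scheme adapted by Chiodaroli et al. The strategy is to leave the rarefaction part of the standard solution untouched and to replace the shock by a fan of finitely many constant states separated by straight self-similar interfaces, chosen so that the Rankine--Hugoniot jump conditions, the energy (admissibility) inequalities across the interfaces, and the subsolution inequalities in the intermediate ``turbulent'' region hold simultaneously; once such an admissible fan subsolution is exhibited, the convex integration machinery yields infinitely many bounded admissible weak solutions with the prescribed boundary data. The delicate point --- and the reason cases 4--6 are more demanding than case 7 --- is to verify that the algebraic and inequality system defining the fan subsolution remains solvable in the presence of a rarefaction or of a degenerate (sonic) shock; establishing this solvability is precisely what the proofs of Theorems \ref{thm:SRexistence} and \ref{thm:Sexistence} would accomplish.
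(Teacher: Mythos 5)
Your proposal matches the paper's treatment: Theorem \ref{thm:summaryriemann} is proved there exactly as a case-by-case assembly, citing \cite{chen} and \cite{feir15} for the rarefaction cases, \cite{chio14} for two shocks, and deferring the shock-containing cases 4--6 to Theorems \ref{thm:SRexistence} and \ref{thm:Sexistence}, whose proofs via admissible fan subsolutions and convex integration carry the genuinely new content. The only nit is attributional: \cite{chio14} is by Chiodaroli and Kreml (the three-author paper is \cite{chio15}), but this does not affect the argument.
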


For an exact proof we refer to the given references. What we want to do here is to describe the basic ideas of the papers cited in the table above. 

If the standard solution is continuous, i.e. it consists only of rarefactions, it is unique. To prove this uniqueness G.-Q. Chen and J. Chen \cite{chen}, and independently E. Feireisl and O. Kreml \cite{feir15} use a relative entropy inequality. 

If the standard solution consists of two shocks, E. Chiodaroli and O. Kreml \cite{chio14} showed that there are infinitely many other admissible weak solutions. In other words the standard solution is non-unique in this case. To prove this they apply the method of convex integration, which was developed by C. De Lellis and L. Sz{\'{e}}kelyhidi \cite{dls09}, \cite{dls10} and leads to infinitely many admissible weak solutions, called \emph{wild solutions}. 

Similar techniques are used by E. Chiodaroli, C. De Lellis and O. Kreml \cite{chio15} to show that for one particular example of initial states, to which the standard solution consists of one shock and one rarefaction, there are infinitely many other admissible weak solutions. 

The cases where the initial data is such that the standard solution consists of just one shock or one shock and one rarefaction (apart from the particular example in \cite{chio15}) remain open and will be covered in this paper. To show non-uniqueness we will use the same strategy as in \cite{chio14} and \cite{chio15}, where the crucial point is to work with an auxiliary state.

\begin{remark}
	We want to add some words on the pressure laws used in the above references.
	\begin{itemize}
		\item G.-Q. Chen and J. Chen \cite{chen} write that their results hold for the same pressure law as we consider but with $\gamma>1$. However it is possible to use $\gamma=1$ since they only consider non-strict inequalities.
		\item E. Feireisl's and O. Kreml's \cite{feir15} results hold for any convex, strictly increasing $C^1$-pressure function. Hence for our pressure law, too.
		\item E. Chiodaroli and O. Kreml \cite{chio14} use our pressure law with $K=1$. However their results are true for any $K>0$. 
	\end{itemize} 
\end{remark}

\begin{remark}
	The case $v_{-\,1}\neq v_{+\,1}$ is not considered in this paper. First results on the question on uniqueness of admissible weak solutions in this case can be found in \cite{brezina}.
\end{remark}


\section{A sufficient condition for non-uniqueness}

This section is a summary of results by E. Chiodaroli, C. De Lellis and O. Kreml \cite{chio14}, \cite{chio15}, which are used to show non-uniqueness. We will use their results in this paper, too. We choose to cite \cite{chio14}, but the same definitions can be found in \cite{chio15}, too.

\subsection{Definitions}

\begin{definition} \emph{(fan partition, see \cite[Definition 4]{chio14})}
	Let \mbox{$\mu_0<\mu_1$} real numbers. A fan partition of $(0,\infty)\times\mathbb{R}^2$ consists of three open sets $P_-,P_1,P_+$ of the form
	\begin{align*}
	P_-&=\{(t,x):t>0\text{ and }x_2<\mu_0\,t\}, \\
	P_1&=\{(t,x):t>0\text{ and }\mu_{0}\,t<x_2<\mu_1\,t\}, \\
	P_+&=\{(t,x):t>0\text{ and }x_2>\mu_1\,t\},
	\end{align*}
	see figure \ref{fig:fanpart}.
	\label{defn:fanpart}
\end{definition}

\begin{figure}[hbt]
	\centering
	\includegraphics[width=0.85\textwidth]{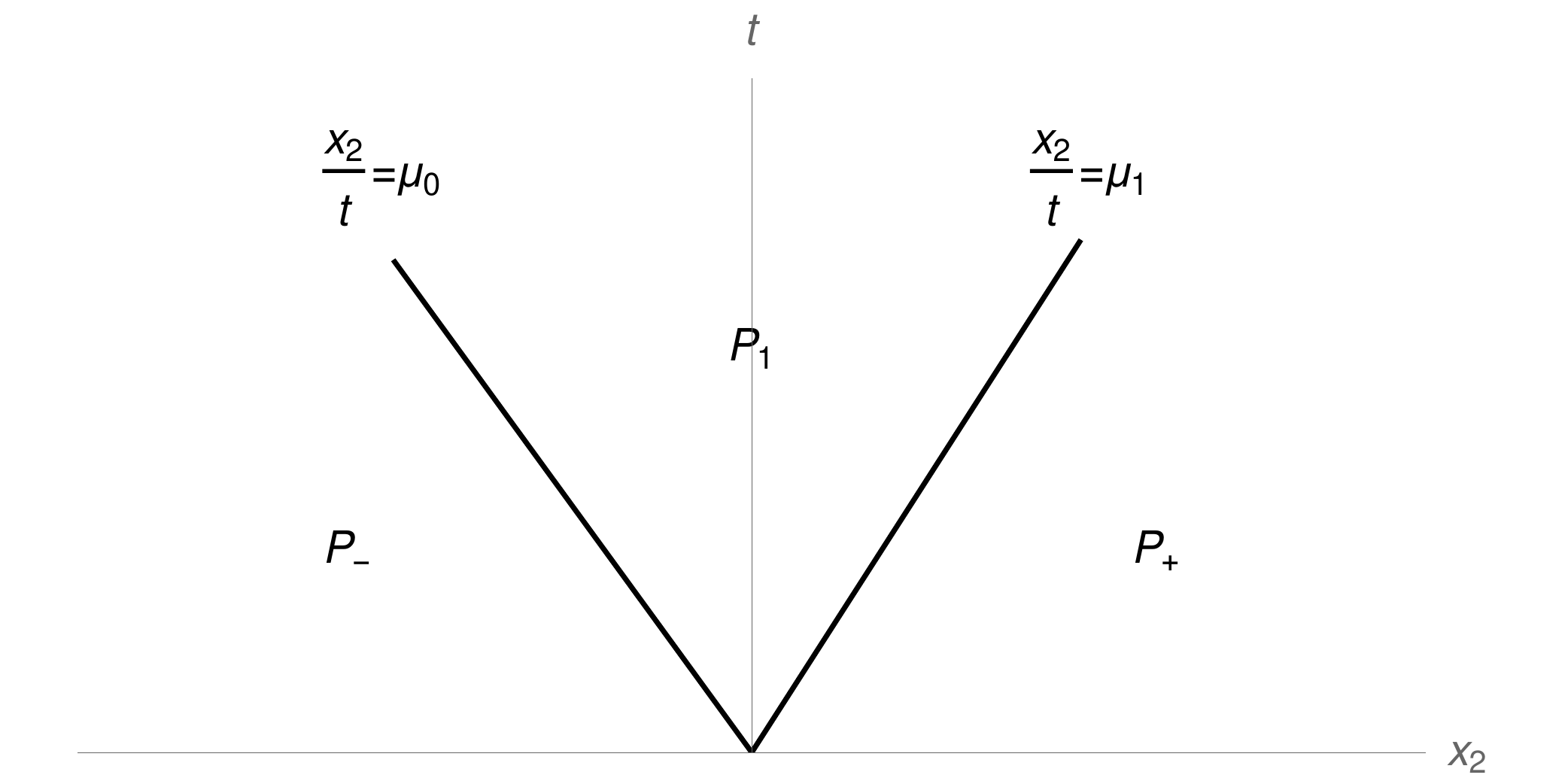}
	\caption{fan partition}
	\label{fig:fanpart}
\end{figure} 

We need to introduce the following notation. The set of real $2\times 2$ matrices which are symmetric will be denoted as $\mathcal{S}^{2\times 2}$, whose subset of symmetric traceless matrices is called $\mathcal{S}_0^{2\times 2}$. In addition to that we write $\ \text{Id}\ $ for the $2\times 2$ identity matrix and $\mathbf{1}_P$ for the indicater function on $P$.

\begin{definition} \emph{(admissible fan subsolution, see \cite[Definitions 5 and 6]{chio14})}
	An admissible fan subsolution to the Euler system \eqref{eq:euler} with initial condition \eqref{eq:ourinit} is a triple $(\overline{\varrho},\overline{v},\overline{u}):(0,\infty)\times\mathbb{R}^2\rightarrow(\mathbb{R}^+\times\mathbb{R}^2\times\mathcal{S}_0^{2\times2})$ of piecewise constant functions, which satisfies the following properties:
	\begin{enumerate}
		\item There exists a fan partition of $(0,\infty)\times\mathbb{R}^2$ and constants $\varrho_1\in\mathbb{R}^+$, $v_1\in\mathbb{R}^2$ and $u_1\in\mathcal{S}_0^{2\times 2}$, such that
		\begin{align*}
		(\overline{\varrho},\overline{v},\overline{u})&=\sum\limits_{i\in\{-,+\}} \bigg(\varrho_i\,,\,v_i\,,\,v_i\otimes v_i - \frac{|v_i|^2}{2}\,\text{Id}\bigg)\,\mathbf{1}_{P_i} + (\varrho_1\,,\,v_1\,,\,u_1)\,\mathbf{1}_{P_1},
		\end{align*}
		where $\varrho_{\pm},v_{\pm}$ are constants given by the initial condition \eqref{eq:ourinit}. 
		\item There is a constant $C_1\in\mathbb{R}^+$ such that\footnote{Here we have an inequality of matrices, which is meant in the sense of definiteness. That means, that $A<B$ for $A,B\in\mathcal{S}^{2\times 2}$, if $B-A$ is positive definite.} 
		\begin{equation*}
		v_1\otimes v_1-u_1 < \frac{C_1}{2}\,\text{Id}.
		\end{equation*}
		\item For all test functions $(\psi,\phi)\in C_c^\infty([0,\infty)\times\mathbb{R}^2,\mathbb{R}\times\mathbb{R}^2)$ the following identities hold:
		\begin{align*}
		\int_0^\infty\int_{\mathbb{R}^2}\big(\overline{\varrho}\,\partial_t\psi + \overline{\varrho}\,\overline{v}\cdot\nabla_x\psi\big)\dd x\,\dd t + \int_{\mathbb{R}^2} \varrho_0(x)\,\psi(0,x)\,\dd x\ &=\ 0, \\
		\int_0^\infty\int_{\mathbb{R}^2}\Bigg[\overline{\varrho}\,\overline{v}\cdot\partial_t\phi + \overline{\varrho}\,\Big(\big(\overline{v}\otimes \overline{v}\big)\,\mathbf{1}_{P_- \cup P_+} + u_1\,\mathbf{1}_{P_1}\Big):D_x\phi\qquad\qquad\qquad&  \\
		+ \bigg(p(\overline{\varrho}) +\frac{1}{2}\,\varrho_1\,C_1\,\mathbf{1}_{P_1}\bigg)\,\text{div}_x\phi\Bigg]\dd x\,\dd t  + \int_{\mathbb{R}^2} \varrho_0(x)\,v_0(x)\cdot\phi(0,x)\,\dd x\  &=\ 0. 
		\end{align*}
		\item For every non-negative test function $\varphi\in C_c^\infty([0,\infty)\times\mathbb{R}^2,\mathbb{R}_0^+)$ the inequality
		\begin{align*}
		&\int_0^\infty\int_{\mathbb{R}^2}\Bigg[\bigg(\overline{\varrho}\,\varepsilon(\overline{\varrho}) + \frac{1}{2}\,\overline{\varrho}\,\Big(|\overline{v}|^2\,\mathbf{1}_{P_- \cup P_+} + C_1\,\mathbf{1}_{P_1}\Big)\bigg)\,\partial_t\varphi \\
		&\qquad+ \bigg(\overline{\varrho}\,\varepsilon(\overline{\varrho})+p(\overline{\varrho}) + \frac{1}{2}\,\overline{\varrho}\,\Big(|\overline{v}|^2\,\mathbf{1}_{P_- \cup P_+} + C_1\,\mathbf{1}_{P_1}\Big)\bigg)\,\overline{v}\cdot\nabla_x\varphi\Bigg]\dd x\,\dd t \\
		&\quad+ \int_{\mathbb{R}^2} \varrho_0(x)\,\bigg(\varepsilon(\varrho_0(x)) + \frac{|v_0(x)|^2}{2}\bigg)\,\varphi(0,x)\,\dd x \quad\geq \quad 0 
		\end{align*}
		is fulfilled.
	\end{enumerate}
	\label{defn:admfansubs}
\end{definition}

\subsection{The condition}
It turns out that the existence of an admissible fan subsolution implies existence of infinitely many admissible weak solutions.
\begin{theorem} \emph{(see \cite[Proposition 3.1]{chio14})}
	Let $(\varrho_\pm,v_\pm)$ be such that there exists an admissible fan subsolution $(\overline{\varrho},\overline{v},\overline{u})$ to the Cauchy problem \eqref{eq:euler}, \eqref{eq:ourinit}. Then there are infinitely many admissible weak solutions $(\varrho,v)$ to \eqref{eq:euler}, \eqref{eq:ourinit} with the following properties:
	\begin{itemize}
		\item $\varrho=\overline{\varrho}$,
		\item $v(t,x)=\overline{v}(t,x)$ for almost all $(t,x)\in P_- \cup P_+$, 
		\item $|v(t,x)|^2=C_1$ for almost all $(t,x)\in P_1$.
	\end{itemize}
	\label{thm:condition}
\end{theorem}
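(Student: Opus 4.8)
The plan is to localise everything to the middle sector $P_1$, where the density is frozen at the constant value $\varrho_1$, and to replace the single subsolution velocity $v_1$ by infinitely many genuine oscillatory velocity fields produced by the convex integration machinery of De~Lellis and Sz{\'e}kelyhidi \cite{dls09}, \cite{dls10}. First I would record what the three structural properties of Definition~\ref{defn:admfansubs} say inside $P_1$: since $\overline\varrho\equiv\varrho_1$ there, the continuity part of property~(3) forces $\mathrm{div}_x v=0$, while the momentum part encodes that the traceless field $\varrho_1 u_1$ together with the isotropic field $\bigl(p(\varrho_1)+\tfrac12\varrho_1 C_1\bigr)\mathrm{Id}$ is, jointly with the outer constant states, a global weak solution of the associated \emph{linear} system. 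Decomposing the true momentum flux as $\varrho_1\,v\otimes v+p(\varrho_1)\,\mathrm{Id}=\varrho_1\bigl(v\otimes v-\tfrac12 C_1\,\mathrm{Id}\bigr)+\bigl(p(\varrho_1)+\tfrac12\varrho_1 C_1\bigr)\mathrm{Id}$ shows that, once $|v|^2=C_1$, its isotropic part coincides exactly with the scalar term in property~(3); property~(2), namely $v_1\otimes v_1-u_1<\tfrac{C_1}{2}\mathrm{Id}$, is precisely the statement that the subsolution sits strictly inside the constraint set at the energy level $\tfrac{C_1}{2}$.

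Next I would invoke the Tartar plane-wave / Baire-category scheme. The target is to construct, on $P_1$, infinitely many $L^\infty$ fields $v$ with $\mathrm{div}_x v=0$, with $|v|^2=C_1$ almost everywhere, coinciding with $v_1$ in the trace sense on $\partial P_1$ (so that the perturbation $v-v_1$ is essentially supported in $P_1$), and for which $\partial_t\bigl(\varrho_1(v-v_1)\bigr)+\mathrm{div}_x\bigl(\varrho_1(v\otimes v-\tfrac12 C_1\,\mathrm{Id}-u_1)\bigr)=0$ holds weakly. The strict inequality~(2) places the subsolution value in the interior of the relevant $\Lambda$-convex hull, which is exactly the open condition that launches the perturbation property, and the residual set supplied by the Baire argument then yields infinitely many mutually distinct such $v$. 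Because $P_1$ is an open cone on which the subsolution is \emph{constant}, this is the cleanest instance of the construction, and I would cite \cite[Proposition~3.1]{chio14}, which rests on \cite{dls09} and \cite{dls10}, rather than reproduce the oscillatory iteration.

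Then comes the gluing and the verification. I would set $\varrho:=\overline\varrho$ everywhere, $v:=v_\pm$ on $P_\pm$, and $v$ the convex-integration field on $P_1$, and check that $(\varrho,v)$ is a weak solution in the sense of Definition~\ref{defn:weak}. Property~(3) already guarantees the weak identities across the fan interfaces $x_2=\mu_0t$ and $x_2=\mu_1t$; since I only alter the solution inside $P_1$ and keep the trace $v_1$ on $\partial P_1$, and since the isotropic flux parts match while the traceless discrepancy $\varrho_1(v\otimes v-\tfrac12 C_1\,\mathrm{Id}-u_1)$ is exactly what the convex integration annihilates weakly, the global identities pass to $(\varrho,v)$. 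Admissibility in the sense of Definition~\ref{defn:adm} is inherited in the same way: the true energy density and energy flux differ from the subsolution expressions in property~(4) only on $P_1$, where $\tfrac12\varrho_1|v|^2=\tfrac12\varrho_1 C_1$ reproduces precisely the term appearing there, so the entropy inequality for $(\varrho,v)$ follows from~(4). The three listed properties ($\varrho=\overline\varrho$, $v=\overline v$ on $P_-\cup P_+$, and $|v|^2=C_1$ on $P_1$) then hold by construction.

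The entire difficulty is concentrated in the convex integration step of the second paragraph: verifying that property~(2) is the correct strict inequality to trigger the De~Lellis--Sz{\'e}kelyhidi perturbation property, and that the resulting fields are genuine weak solutions realising the prescribed trace, energy, and (constant) pressure. Everything outside $P_1$, the matching across the two interfaces, and the reduction of admissibility to property~(4) are bookkeeping once that core lemma is in hand.
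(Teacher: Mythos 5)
Your proposal is correct and follows exactly the route of the paper, which for this theorem simply defers to \cite[Proposition 3.1]{chio14}: the argument there is precisely the localization to $P_1$, the identification of property (2) as the strict interior condition for the De~Lellis--Sz\'ekelyhidi perturbation property, the Baire-category production of infinitely many divergence-free fields with $|v|^2=C_1$ solving the linearized system for compactly supported perturbations, and the gluing/admissibility bookkeeping you describe. No discrepancy to report.
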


For the proof we refer to \cite{chio14}. 


\section{The algebraic equations}

Because of theorem \ref{thm:condition} it suffices to show existence of an admissible fan subsolution in order to prove existence of infinitely many admissible weak solutions. In order to construct an admissible fan subsolution we translate definition \ref{defn:admfansubs} into a system of algebraic equations and inequalities for a set of unknown values. The following propositions can be found both in \cite{chio14} and \cite{chio15}.

\begin{proposition} \emph{(see \cite[Proposition 4.1]{chio14})}
	Let $\varrho_-,\varrho_+\in\mathbb{R}^+$, $v_-,v_+\in\mathbb{R}^2$ be given (see initial condition \eqref{eq:ourinit}). The constants $\mu_0,\mu_1\in\mathbb{R}$, $\varrho_1\in\mathbb{R}^+$, 
	\begin{align*}
	v_1&=\left(
	\begin{array}[c]{l}
	v_{1\,1}\\
	v_{1\,2}
	\end{array}\right) \in\mathbb{R}^2, &u_1&=\left(
	\begin{array}[c]{rr}
	u_{1\,11} & u_{1\,12} \\
	u_{1\,12} & -u_{1\,11}
	\end{array} \right)\in\mathcal{S}_0^{2\times 2}
	\end{align*}
	and $C_1\in\mathbb{R}^+$ define an admissible fan subsolution to the Cauchy problem \eqref{eq:euler}, \eqref{eq:ourinit} if and only if they fulfill the following algebraic equations and inequalities: 
	\begin{itemize}
		\item Order of the speeds:
		\begin{equation}
		\mu_0<\mu_1
		\label{eq:1order}
		\end{equation}
		\item Rankine Hugoniot conditions on the left interface:
		\begin{align}
		\mu_0\,(\varrho_- - \varrho_1) &= \varrho_-\,v_{-\,2} - \varrho_1\,v_{1\,2} \label{eq:1rhl1}\\
		\mu_0\,(\varrho_-\,v_{-\,1} - \varrho_1\,v_{1\,1}) &= \varrho_-\,v_{-\,1}\,v_{-\,2} - \varrho_1\,u_{1\,12} \label{eq:1rhl2}\\
		\mu_0\,(\varrho_-\,v_{-\,2} - \varrho_1\,v_{1\,2}) &= \varrho_-\,v_{-\,2}^2 + \varrho_1\,u_{1\,11} + p(\varrho_-) - p(\varrho_1) - \varrho_1\,\frac{C_1}{2} \label{eq:1rhl3}
		\end{align}
		\item Rankine Hugoniot conditions on the right interface:
		\begin{align}
		\mu_1\,(\varrho_1 - \varrho_+) &= \varrho_1\,v_{1\,2} - \varrho_+\,v_{+\,2} \label{eq:1rhr1}\\
		\mu_1\,(\varrho_1\,v_{1\,1} - \varrho_+\,v_{+\,1}) &= \varrho_1\,u_{1\,12} - \varrho_+\,v_{+\,1}\,v_{+\,2} \label{eq:1rhr2}\\
		\mu_1\,(\varrho_1\,v_{1\,2} - \varrho_+\,v_{+\,2}) &= - \varrho_1\,u_{1\,11} - \varrho_+\,v_{+\,2}^2 + p(\varrho_1) - p(\varrho_+) + \varrho_1\,\frac{C_1}{2} \label{eq:1rhr3}
		\end{align}
		\item Subsolution condition:
		\begin{align}
		v_{1\,1}^2 + v_{1\,2}^2 &< C_1 \label{eq:1sc1}\\
		\bigg(\frac{C_1}{2} - v_{1\,1}^2 + u_{1\,11}\bigg) \bigg(\frac{C_1}{2} - v_{1\,2}^2 - u_{1\,11}\bigg) - (u_{1\,12}-v_{1\,1}\,v_{1\,2})^2 &> 0 \label{eq:1sc2}
		\end{align}
		\item Admissibility condition on the left interface:
		\begin{align}
		&\mu_0\,\bigg(\varrho_-\,\varepsilon(\varrho_-)+\varrho_-\,\frac{|v_-|^2}{2}-\varrho_1\,\varepsilon(\varrho_1)-\varrho_1\,\frac{C_1}{2}\bigg) \notag\\
		&\leq\big(\varrho_-\,\varepsilon(\varrho_-)+p(\varrho_-)\big)\,v_{-\,2}-\big(\varrho_1\,\varepsilon(\varrho_1)+p(\varrho_1)\big)\,v_{1\,2}+\varrho_-\,v_{-\,2}\,\frac{|v_-|^2}{2}-\varrho_1\,v_{1\,2}\,\frac{C_1}{2}
		\label{eq:1adml}
		\end{align}
		\item Admissibility condition on the right interface:
		\begin{align}
		&\mu_1\,\bigg(\varrho_1\,\varepsilon(\varrho_1)+\varrho_1\,\frac{C_1}{2}-\varrho_+\,\varepsilon(\varrho_+)-\varrho_+\,\frac{|v_+|^2}{2}\bigg) \notag\\
		&\leq\big(\varrho_1\,\varepsilon(\varrho_1)+p(\varrho_1)\big)\,v_{1\,2}-\big(\varrho_+\,\varepsilon(\varrho_+)+p(\varrho_+)\big)\,v_{+\,2}+\varrho_1\,v_{1\,2}\,\frac{C_1}{2}-\varrho_+\,v_{+\,2}\,\frac{|v_+|^2}{2}
		\label{eq:1admr}
		\end{align}
	\end{itemize}
	\label{prop:1alggl}
\end{proposition}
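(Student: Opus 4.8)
The plan is to prove the stated equivalence by translating, one property at a time, each of the four defining conditions of an admissible fan subsolution in Definition \ref{defn:admfansubs} into the corresponding line of algebraic relations. The first property fixes the piecewise-constant ansatz on the fan partition; the ordering requirement $\mu_0<\mu_1$ built into that partition is exactly \eqref{eq:1order}, so no further work is needed there. The remaining three properties are then handled in turn, and for each I must argue both directions of the ``if and only if''.

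For the second property I would recast the matrix inequality $v_1\otimes v_1-u_1<\frac{C_1}{2}\,\text{Id}$ as positive definiteness of the symmetric matrix $M:=\frac{C_1}{2}\,\text{Id}-v_1\otimes v_1+u_1$. Writing out $v_1\otimes v_1$ and $u_1$ explicitly, the diagonal entries of $M$ are $\frac{C_1}{2}-v_{1\,1}^2+u_{1\,11}$ and $\frac{C_1}{2}-v_{1\,2}^2-u_{1\,11}$, and the off-diagonal entry is $-(v_{1\,1}\,v_{1\,2}-u_{1\,12})$. For a real symmetric $2\times2$ matrix the eigenvalues are real, so positive definiteness is equivalent to positive trace together with positive determinant (this controls the sum and the product of the eigenvalues). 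The trace of $M$ is $C_1-v_{1\,1}^2-v_{1\,2}^2$, whose positivity is \eqref{eq:1sc1}, and the determinant of $M$ is precisely the left-hand side of \eqref{eq:1sc2}; this establishes the equivalence for the subsolution condition.

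The core of the argument is the third property. Here I would use that the whole triple is independent of $x_1$ and piecewise constant on the fan partition, so each weak identity collapses to its one-dimensional analogue in $(t,x_2)$: every term carrying $\partial_{x_1}\phi$ integrates to zero over $x_1$ against the $x_1$-independent coefficients. Splitting the space-time integral over the three regions $P_-,P_1,P_+$ and integrating by parts on each, the interior contributions vanish since the states are constant, the explicit initial integral cancels the boundary contribution along $t=0$ thanks to the matching of $(\varrho_0,v_0)$ (note $P_1$ degenerates at $t=0$), and only the boundary terms along the rays $x_2=\mu_0\,t$ and $x_2=\mu_1\,t$ survive. Reading these off for the continuity equation gives \eqref{eq:1rhl1} and \eqref{eq:1rhr1}; doing the same for the two components of the modified momentum equation — in $P_1$ the flux $\varrho\,v\otimes v$ is replaced by $\varrho_1 u_1$ and the pressure by $p(\varrho_1)+\frac{1}{2}\varrho_1 C_1$ — yields \eqref{eq:1rhl2}, \eqref{eq:1rhl3} on the left ray and \eqref{eq:1rhr2}, \eqref{eq:1rhr3} on the right ray. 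Since the test functions are arbitrary, the integral identities hold iff all these Rankine--Hugoniot relations do.

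Finally, the fourth property is treated by the same localisation, now for the entropy density $\eta=\varrho\,\varepsilon(\varrho)+\frac{1}{2}\varrho\big(|v|^2\,\mathbf{1}_{P_-\cup P_+}+C_1\,\mathbf{1}_{P_1}\big)$ and its flux, but tracking the inequality and the non-negativity of $\varphi$. The interior and initial terms disappear as before, leaving ray contributions; requiring the inequality for all non-negative $\varphi$ forces the one-sided jump inequalities $\mu_0(\eta_- -\eta_1)\leq q_- -q_1$ and $\mu_1(\eta_1-\eta_+)\leq q_1-q_+$, which are exactly \eqref{eq:1adml} and \eqref{eq:1admr}. I expect the main difficulty to be bookkeeping rather than conceptual: carrying out the piecewise integration by parts so that the ray orientations and the directions of all jumps stay consistent across the continuity, momentum and energy relations, and in particular extracting the correct sign in the energy inequality from the sign constraint on $\varphi$.
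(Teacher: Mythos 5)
Your outline is correct and is essentially the argument of the cited reference \cite[Proposition 4.1]{chio14}, to which the paper defers without reproducing a proof: the fan-partition ordering gives \eqref{eq:1order}, positive definiteness of $\frac{C_1}{2}\,\text{Id}-v_1\otimes v_1+u_1$ via trace and determinant gives \eqref{eq:1sc1}--\eqref{eq:1sc2}, the reduction to $(t,x_2)$ and piecewise integration by parts turns the weak identities into the Rankine--Hugoniot relations \eqref{eq:1rhl1}--\eqref{eq:1rhr3}, and the localized entropy inequality with non-negative test functions gives \eqref{eq:1adml}--\eqref{eq:1admr}. The signs and the modified flux $\varrho_1 u_1$, $p(\varrho_1)+\frac{1}{2}\varrho_1 C_1$ in $P_1$ are all handled correctly.
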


\begin{remark}
	The above proposition \ref{prop:1alggl} holds even if $v_{-\,1}\neq v_{+\,1}$.
\end{remark}

The equations and inequalities in proposition \ref{prop:1alggl} can be simplified further if $v_{-\,1}=v_{+\,1}$, which is the content of the following proposition.

\begin{proposition} \emph{(see \cite[Lemma 4.4]{chio14})}
	Let $\varrho_-,\varrho_+\in\mathbb{R}^+$, $v_-,v_+\in\mathbb{R}^2$ with $v_{-\,1}=v_{+\,1}$ be given (see initial condition \eqref{eq:ourinit}). There exists an admissible fan subsolution to the Cauchy problem \eqref{eq:euler}, \eqref{eq:ourinit} if and only if there exist constants $\mu_0,\mu_1\in\mathbb{R}$, $\varrho_1\in\mathbb{R}^+$, $v_{1\,2}\in\mathbb{R}$ and $\delta_1,\delta_2\in\mathbb{R}$ such that the following algebraic equations and inequalities hold:
	\begin{itemize}
		\item Order of the speeds:
		\begin{equation}
		\mu_0<\mu_1
		\label{eq:2order}
		\end{equation}
		\item Rankine Hugoniot conditions on the left interface
		\begin{align}
		\mu_0\,(\varrho_- - \varrho_1) &= \varrho_-\,v_{-\,2} - \varrho_1\,v_{1\,2} \label{eq:2rhl1}\\
		\mu_0\,(\varrho_-\,v_{-\,2} - \varrho_1\,v_{1\,2}) &= \varrho_-\,v_{-\,2}^2 - \varrho_1\,(v_{1\,2}^2+\delta_1) + p(\varrho_-) - p(\varrho_1) \label{eq:2rhl2}
		\end{align}
		\item Rankine Hugoniot conditions on the right interface
		\begin{align}
		\mu_1\,(\varrho_1 - \varrho_+) &= \varrho_1\,v_{1\,2} - \varrho_+\,v_{+\,2} \label{eq:2rhr1}\\
		\mu_1\,(\varrho_1\,v_{1\,2} - \varrho_+\,v_{+\,2}) &= \varrho_1\,(v_{1\,2}^2+\delta_1) - \varrho_+\,v_{+\,2}^2 + p(\varrho_1) - p(\varrho_+) \label{eq:2rhr2}
		\end{align}
		\item Subsolution condition
		\begin{align}
		\delta_1>0 \label{eq:2sc1}\\
		\delta_2>0 \label{eq:2sc2}
		\end{align}
		\item Admissibility condition on the left interface
		\begin{align}
		(v_{1\,2}-v_{-\,2})\,&\bigg(p(\varrho_-)+p(\varrho_1)-2\,\varrho_-\,\varrho_1\,\frac{\varepsilon(\varrho_-)-\varepsilon(\varrho_1)}{\varrho_- -\varrho_1}\bigg) \notag\\
		&\leq\delta_1\,\varrho_1\,(v_{1\,2}+v_{-\,2})-(\delta_1+\delta_2)\,\frac{\varrho_-\,\varrho_1\,(v_{1\,2}-v_{-\,2})}{\varrho_- -\varrho_1}
		\label{eq:2adml}
		\end{align}
		\item Admissibility condition on the right interface
		\begin{align}
		(v_{+\,2}-v_{1\,2})\,&\bigg(p(\varrho_1)+p(\varrho_+)-2\,\varrho_1\,\varrho_+\,\frac{\varepsilon(\varrho_1)-\varepsilon(\varrho_+)}{\varrho_1 -\varrho_+}\bigg) \notag\\
		&\leq-\delta_1\,\varrho_1\,(v_{+\,2}+v_{1\,2})+(\delta_1+\delta_2)\,\frac{\varrho_1\,\varrho_+\,(v_{+\,2}-v_{1\,2})}{\varrho_1 -\varrho_+}
		\label{eq:2admr}
		\end{align}
	\end{itemize}
	\label{prop:2alggl}
\end{proposition}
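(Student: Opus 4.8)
Since Proposition~\ref{prop:1alggl} characterises admissible fan subsolutions through the system \eqref{eq:1order}--\eqref{eq:1admr}, an admissible fan subsolution exists if and only if that system is solvable in the unknowns $\mu_0,\mu_1,\varrho_1,v_{1\,1},v_{1\,2},u_{1\,11},u_{1\,12},C_1$. My plan is therefore to show that, under the standing hypothesis $v_{-\,1}=v_{+\,1}$, this system is solvable if and only if the reduced system \eqref{eq:2order}--\eqref{eq:2admr} is solvable in $\mu_0,\mu_1,\varrho_1,v_{1\,2},\delta_1,\delta_2$. The change of variables that makes this work is
\begin{equation*}
\delta_1:=\frac{C_1}{2}-v_{1\,2}^2-u_{1\,11},\qquad\delta_2:=\frac{C_1}{2}-v_{1\,1}^2+u_{1\,11},
\end{equation*}
with inverse $C_1=\delta_1+\delta_2+v_{1\,1}^2+v_{1\,2}^2$ and $u_{1\,11}=\tfrac12\big(\delta_2-\delta_1+v_{1\,1}^2-v_{1\,2}^2\big)$. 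I will carry the two implications in parallel, constructing the missing unknowns explicitly in the backward direction.

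The linchpin is the elimination of the $x_1$-direction. Given a solution of the full system, I would combine each transverse momentum balance with the corresponding mass balance: introducing the mass fluxes $m_0:=\varrho_-(v_{-\,2}-\mu_0)=\varrho_1(v_{1\,2}-\mu_0)$ and $m_1:=\varrho_1(v_{1\,2}-\mu_1)=\varrho_+(v_{+\,2}-\mu_1)$ coming from \eqref{eq:1rhl1}, \eqref{eq:1rhr1}, the equations \eqref{eq:1rhl2} and \eqref{eq:1rhr2} can each be solved for $u_{1\,12}$, giving
\begin{equation*}
u_{1\,12}=v_{-\,1}\,v_{1\,2}+\mu_0\,(v_{1\,1}-v_{-\,1})\qquad\text{and}\qquad u_{1\,12}=v_{+\,1}\,v_{1\,2}+\mu_1\,(v_{1\,1}-v_{+\,1}).
\end{equation*}
Subtracting these and invoking $v_{-\,1}=v_{+\,1}$ together with the strict ordering \eqref{eq:1order}, which guarantees $\mu_0\neq\mu_1$, forces $v_{1\,1}=v_{-\,1}=v_{+\,1}$ and hence $u_{1\,12}=v_{1\,1}\,v_{1\,2}$; in particular the off-diagonal quantity $u_{1\,12}-v_{1\,1}\,v_{1\,2}$ vanishes. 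In the backward direction I simply \emph{set} $v_{1\,1}:=v_{-\,1}=v_{+\,1}$ and $u_{1\,12}:=v_{1\,1}\,v_{1\,2}$, whereupon \eqref{eq:1rhl2} and \eqref{eq:1rhr2} collapse to scalar multiples of the mass balances \eqref{eq:1rhl1}, \eqref{eq:1rhr1} and hold automatically.

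With the $x_1$-direction fixed, the diagonal part of the system translates almost verbatim. The mass balances \eqref{eq:1rhl1}, \eqref{eq:1rhr1} are literally \eqref{eq:2rhl1}, \eqref{eq:2rhr1}. In the normal momentum balances \eqref{eq:1rhl3}, \eqref{eq:1rhr3} the definition of $\delta_1$ gives $\varrho_1 u_{1\,11}-\varrho_1\tfrac{C_1}{2}=-\varrho_1\big(v_{1\,2}^2+\delta_1\big)$, which turns them into exactly \eqref{eq:2rhl2}, \eqref{eq:2rhr2}. For the subsolution condition, $C_1-v_{1\,1}^2-v_{1\,2}^2=\delta_1+\delta_2$ makes \eqref{eq:1sc1} read $\delta_1+\delta_2>0$, while \eqref{eq:1sc2}, with the off-diagonal term gone, reduces to $\delta_1\,\delta_2>0$; a positive product together with a positive sum is equivalent to the pair $\delta_1>0$, $\delta_2>0$, i.e. \eqref{eq:2sc1}, \eqref{eq:2sc2}. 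This settles everything except the two energy inequalities.

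I expect the admissibility computation to be the main obstacle. To reduce \eqref{eq:1adml} I would eliminate $\mu_0$ through the mass flux, using
\begin{equation*}
m_0=\varrho_-(v_{-\,2}-\mu_0)=\varrho_1(v_{1\,2}-\mu_0)=\frac{\varrho_-\,\varrho_1\,(v_{1\,2}-v_{-\,2})}{\varrho_- -\varrho_1},
\end{equation*}
which is precisely the ratio appearing on the right of \eqref{eq:2adml} and which stays well defined in the degenerate case $\varrho_-=\varrho_1$ (there $v_{-\,2}=v_{1\,2}$ and $m_0=0$). After substituting $C_1$ and $u_{1\,11}$ and cancelling the $x_1$-kinetic terms, which drop out because $v_{1\,1}=v_{-\,1}$ and $\varrho_-(v_{-\,2}-\mu_0)=\varrho_1(v_{1\,2}-\mu_0)$, the internal-energy contributions collapse to $m_0\big(\varepsilon(\varrho_-)-\varepsilon(\varrho_1)\big)$ and \eqref{eq:1adml} becomes a relation that I will call $(\star)$. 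Multiplying $(\star)$ by $2$ and using the normal momentum balance \eqref{eq:2rhl2} to rewrite the remaining pressure terms $p(\varrho_-)\,v_{-\,2}-p(\varrho_1)\,v_{1\,2}$ then produces exactly \eqref{eq:2adml}; the factor $2$ accounts for the coefficient of the divided difference $\frac{\varepsilon(\varrho_-)-\varepsilon(\varrho_1)}{\varrho_- -\varrho_1}$ there. The right interface is handled identically with $m_1$ and \eqref{eq:2rhr2}. Every manipulation above is an equivalence once $v_{1\,1},u_{1\,12},u_{1\,11},C_1$ are linked to $\delta_1,\delta_2$ as stated, so the same chain read backwards yields the converse implication and completes the proof.
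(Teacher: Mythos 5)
Your reduction is correct: the substitution $\delta_1=\tfrac{C_1}{2}-v_{1\,2}^2-u_{1\,11}$, $\delta_2=\tfrac{C_1}{2}-v_{1\,1}^2+u_{1\,11}$, the forced identities $v_{1\,1}=v_{-\,1}$ and $u_{1\,12}=v_{1\,1}v_{1\,2}$, and the rewriting of the energy inequalities via the mass flux $m_0=\varrho_-\varrho_1(v_{1\,2}-v_{-\,2})/(\varrho_--\varrho_1)$ and the normal momentum balance all check out and deliver \eqref{eq:2order}--\eqref{eq:2admr} in both directions. The paper itself gives no proof of this proposition, deferring to \cite[Lemma 4.4]{chio14}, and your argument is essentially the same reduction from Proposition~\ref{prop:1alggl} carried out there.
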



\section{Lemmas}
Later on we will also need the following lemmas.
\begin{lemma} \emph{(see \cite[Lemma 2.1]{chio14})}
	For all $\varrho_-\neq\varrho_+$, $\varrho_\pm>0$ it holds that 
	\begin{equation}
	p(\varrho_-) + p(\varrho_+) - 2\,\varrho_-\,\varrho_+\,\frac{\varepsilon(\varrho_+) - \varepsilon(\varrho_-)}{\varrho_+ - \varrho_-} > 0.
	\label{eq:Lem1}
	\end{equation}
	\label{lemma:auschiokreml}
\end{lemma}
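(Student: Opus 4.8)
The plan is to recast the claimed inequality \eqref{eq:Lem1} as the statement that the trapezoidal rule strictly overestimates the integral of a strictly convex function. The starting point is the defining relation $p(\varrho)=\varrho^2\,\varepsilon'(\varrho)$, i.e.\ $\varepsilon'(r)=p(r)/r^2$, which lets me rewrite the difference quotient appearing in \eqref{eq:Lem1} as an integral,
\begin{equation*}
\frac{\varepsilon(\varrho_+)-\varepsilon(\varrho_-)}{\varrho_+-\varrho_-}=\frac{1}{\varrho_+-\varrho_-}\int_{\varrho_-}^{\varrho_+}\frac{p(r)}{r^2}\,\dd r.
\end{equation*}
The left-hand side of \eqref{eq:Lem1} is symmetric under exchanging $\varrho_-$ and $\varrho_+$ (the sign flips in numerator and denominator of the quotient cancel), so I may assume without loss of generality that $\varrho_-<\varrho_+$. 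Writing $a=\varrho_-$ and $b=\varrho_+$ with $a<b$, and dividing by the positive factor $2ab/(b-a)$, the inequality \eqref{eq:Lem1} is equivalent to
\begin{equation*}
\frac{b-a}{2ab}\bigl(p(a)+p(b)\bigr)>\int_a^b\frac{p(r)}{r^2}\,\dd r.
\end{equation*}

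Next I would perform the substitution $s=1/r$, so that $\dd r/r^2=-\dd s$ and the integral becomes $\int_a^b p(r)/r^2\,\dd r=\int_{1/b}^{1/a}p(1/s)\,\dd s$. Setting $\alpha=1/b<\beta=1/a$ and introducing $g(s):=p(1/s)$, the prefactor becomes $\frac{b-a}{2ab}=\tfrac12\bigl(\tfrac1a-\tfrac1b\bigr)=\tfrac12(\beta-\alpha)$, while $p(a)=g(\beta)$ and $p(b)=g(\alpha)$. Hence the inequality to be proved takes the clean form
\begin{equation*}
\frac{\beta-\alpha}{2}\bigl(g(\alpha)+g(\beta)\bigr)>\int_\alpha^\beta g(s)\,\dd s,
\end{equation*}
which is exactly the assertion that the trapezoid through $(\alpha,g(\alpha))$ and $(\beta,g(\beta))$ lies strictly above the graph of $g$. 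By the standard fact about convex functions (the chord dominates the graph, strictly so when the function is strictly convex and $\alpha<\beta$), this holds provided $g$ is strictly convex on $(0,\infty)$.

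The remaining point, which I view as the only substantive step, is to verify the convexity of $g(s)=p(1/s)$. Differentiating twice gives $g''(s)=2s^{-3}\,p'(1/s)+s^{-4}\,p''(1/s)$; since $p$ is strictly increasing ($p'>0$) and convex ($p''\geq0$)—both guaranteed by the polytropic law $p(\varrho)=K\varrho^\gamma$ with $K>0$, $\gamma\geq1$—each term is nonnegative and the first is strictly positive, so $g''>0$ throughout. This establishes strict convexity of $g$ and thereby the trapezoidal inequality, completing the proof. I do not anticipate a genuine obstacle here: once the integral representation and the reciprocal substitution are in place, everything reduces to the elementary convexity estimate, and the only care needed is in checking that the case $\gamma=1$ (where $\varepsilon(\varrho)=K\log\varrho$) is covered, which it is since $g(s)=K/s$ is also strictly convex on $(0,\infty)$.
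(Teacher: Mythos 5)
Your argument is correct, and it takes a genuinely different route from the paper: the paper does not actually prove Lemma \ref{lemma:auschiokreml} but simply defers to \cite[Lemma 2.1]{chio14}, where the inequality is verified for the explicit power law $p(\varrho)=\varrho^\gamma$ by a direct computation, followed by the remark that the constant $K$ is harmless. Your proof is self-contained and more conceptual: the integral representation $\varepsilon(\varrho_+)-\varepsilon(\varrho_-)=\int_{\varrho_-}^{\varrho_+}p(r)\,r^{-2}\,\dd r$ coming from $p(\varrho)=\varrho^2\varepsilon'(\varrho)$, the symmetry reduction to $\varrho_-<\varrho_+$, and the substitution $s=1/r$ correctly turn \eqref{eq:Lem1} into the strict trapezoidal inequality for $g(s)=p(1/s)$, and the verification $g''(s)=2s^{-3}p'(1/s)+s^{-4}p''(1/s)>0$ is right. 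What your approach buys is generality and transparency: it uses only $p'>0$ and $p''\geq 0$, so it covers any strictly increasing convex pressure law in one stroke (all $K>0$ and $\gamma\geq 1$, including the logarithmic internal energy at $\gamma=1$), whereas the computation in \cite{chio14} is tied to the specific power law and must be supplemented by the remark about $K$. The only point worth making explicit in a final write-up is the strictness of the chord-above-graph inequality (it holds because $g$ is strictly convex and $\alpha<\beta$), which you do address.
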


\begin{proof}
	The lemma is proved by E. Chiodaroli and O. Kreml \cite[Lemma 2.1]{chio14}. They show the result for the pressure law $p(\varrho)=\varrho^\gamma$, where $\gamma\geq 1$. In other words $K=1$. However \eqref{eq:Lem1} is also true for the more general pressure law $p(\varrho)=K\,\varrho^\gamma$, where $K>0$ and $\gamma\geq 1$. \qed
\end{proof}

\begin{lemma}
	For all $\varrho_- < \varrho_+$ the following inequality is fulfilled:
	\begin{equation}
	\int_{\varrho_-}^{\varrho_+}\frac{\sqrt{p'(r)}}{r}\,\dd r < \sqrt{\frac{(\varrho_- - \varrho_+)\,\big(p(\varrho_-)-p(\varrho_+)\big)}{\varrho_-\,\varrho_+}}.
	\label{eq:Lem2}
	\end{equation} 
	\label{lemma:introot}
\end{lemma}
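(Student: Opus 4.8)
The plan is to recognize the right-hand side of \eqref{eq:Lem2} as exactly the geometric-mean bound produced by the Cauchy--Schwarz inequality applied to the integrand on the left. First I would rewrite the two factors appearing under the square root as integrals over $[\varrho_-,\varrho_+]$. Since $p$ is an antiderivative of $p'$ one has $p(\varrho_+)-p(\varrho_-)=\int_{\varrho_-}^{\varrho_+} p'(r)\,\dd r$, and an elementary computation gives
\begin{equation*}
\frac{\varrho_+-\varrho_-}{\varrho_-\,\varrho_+}=\frac{1}{\varrho_-}-\frac{1}{\varrho_+}=\int_{\varrho_-}^{\varrho_+}\frac{1}{r^2}\,\dd r.
\end{equation*}
Because $\varrho_-<\varrho_+$ and $p$ is increasing, both integrals are positive, and the quantity under the root on the right of \eqref{eq:Lem2} is precisely their product.

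Next I would apply the Cauchy--Schwarz inequality using the factorization $\frac{\sqrt{p'(r)}}{r}=\sqrt{p'(r)}\cdot\frac{1}{r}$, that is with $u(r)=\sqrt{p'(r)}$ and $v(r)=\frac{1}{r}$. This gives
\begin{equation*}
\left(\int_{\varrho_-}^{\varrho_+}\frac{\sqrt{p'(r)}}{r}\,\dd r\right)^2\le\left(\int_{\varrho_-}^{\varrho_+} p'(r)\,\dd r\right)\left(\int_{\varrho_-}^{\varrho_+}\frac{1}{r^2}\,\dd r\right)=\frac{(\varrho_- - \varrho_+)\,\big(p(\varrho_-)-p(\varrho_+)\big)}{\varrho_-\,\varrho_+},
\end{equation*}
where the final equality uses the two identities above together with the fact that $(\varrho_+-\varrho_-)\big(p(\varrho_+)-p(\varrho_-)\big)=(\varrho_- - \varrho_+)\big(p(\varrho_-)-p(\varrho_+)\big)$. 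Taking square roots (both sides being nonnegative) yields \eqref{eq:Lem2} with $\le$ in place of $<$.

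Finally I would upgrade this to a strict inequality by inspecting the equality case of Cauchy--Schwarz. Equality would force $u$ and $v$ to be proportional, i.e.\ $\sqrt{p'(r)}=c/r$ for some constant $c$ and almost every $r\in[\varrho_-,\varrho_+]$, which means $p'(r)=c^2/r^2$. For the polytropic law $p(\varrho)=K\,\varrho^\gamma$ one has $p'(r)=K\,\gamma\,r^{\gamma-1}$, and the relation $K\gamma\,r^{\gamma-1}=c^2\,r^{-2}$ cannot hold on a nondegenerate interval when $\gamma\ge1$ (it would require $r^{\gamma+1}$ to be constant). Hence the inequality is strict. The Cauchy--Schwarz step itself is immediate once the factorization of the right-hand side into two integrals is spotted, so I expect the only genuinely delicate point to be this last step: ruling out the (impossible) equality case cleanly for the given pressure law, which is what turns $\le$ into the desired $<$.
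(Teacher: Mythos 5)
Your proof is correct, and it takes a genuinely different route from the paper. The paper computes the integral explicitly for $\gamma>1$, obtaining $\frac{2}{\gamma-1}\big(\sqrt{p'(\varrho_+)}-\sqrt{p'(\varrho_-)}\big)$, squares both sides, substitutes $z=\varrho_+/\varrho_-$, and reduces the claim to showing $f(z)>0$ for $z>1$ for an explicit function $f$ with $f(1)=f'(1)=0$ and $f''>0$; the case $\gamma=1$ is then treated separately (and left to the reader). Your argument instead factors the integrand as $\sqrt{p'(r)}\cdot\frac1r$, applies Cauchy--Schwarz, and observes that
\begin{equation*}
\Big(\int_{\varrho_-}^{\varrho_+}p'(r)\,\dd r\Big)\Big(\int_{\varrho_-}^{\varrho_+}\frac{\dd r}{r^2}\Big)
=\big(p(\varrho_+)-p(\varrho_-)\big)\,\frac{\varrho_+-\varrho_-}{\varrho_-\,\varrho_+}
\end{equation*}
is exactly the square of the right-hand side of \eqref{eq:Lem2}; strictness follows because $\sqrt{p'(r)}=K^{1/2}\gamma^{1/2}r^{(\gamma-1)/2}$ is nowhere proportional to $1/r$ on a nondegenerate interval when $\gamma\ge1$. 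This buys you a uniform treatment of all $\gamma\ge 1$ (no case split, no computation left to the reader) and in fact proves the lemma for any strictly increasing $C^1$ pressure law for which $r^2p'(r)$ is non-constant, whereas the paper's computation is tied to the polytropic form. The only point to state carefully is the equality case of Cauchy--Schwarz for continuous integrands, which you do. Both proofs are valid; yours is shorter and more general.
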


\begin{proof} 
	First we consider the case $\gamma>1$. In this case the integral can be computed to:
	\begin{equation*}
	\int_{\varrho_-}^{\varrho_+}\frac{\sqrt{p'(r)}}{r}\,\dd r =  \frac{2}{\gamma-1}\,\Big(\sqrt{p'(\varrho_+)} - \sqrt{p'(\varrho_-)}\Big).
	\end{equation*} 
	
	Hence the equation \eqref{eq:Lem2} turns into 
	\begin{equation*}
	\frac{2}{\gamma-1}\,\Big(\sqrt{p'(\varrho_+)} - \sqrt{p'(\varrho_-)}\Big) < \sqrt{\frac{(\varrho_- - \varrho_+)\,\big(p(\varrho_-)-p(\varrho_+)\big)}{\varrho_-\,\varrho_+}}.
	\end{equation*}
	
	Because $p''(\varrho)>0$ for all $\varrho>0$ and $\gamma>1$, $p'$ is increasing. Hence both sides of the above inequality are positive and therefore it is equivalent to 
	\begin{equation*}
	\frac{4}{(\gamma-1)^2}\,\Big(\sqrt{p'(\varrho_+)} - \sqrt{p'(\varrho_-)}\Big)^2 < \frac{(\varrho_- - \varrho_+)\,\big(p(\varrho_-)-p(\varrho_+)\big)}{\varrho_-\,\varrho_+}.
	\end{equation*}
	
	Remember that $p(\varrho)=K\,\varrho^\gamma$ and $p'(\varrho)=K\,\gamma\,\varrho^{\gamma-1}$. Divide the inequality above by $K$ and $\varrho_-^{\gamma-1}$, and define $z:=\frac{\varrho_+}{\varrho_-}$:
	\begin{equation*}
	\frac{4\,\gamma}{(\gamma-1)^2}\,\big(z^{\gamma-1} - 2\,z^{\frac{\gamma-1}{2}} + 1\big) < \frac{1}{z}\,(z-1)\,(z^\gamma-1).
	\end{equation*}
	
	Let 
	\begin{equation*}
	f(z) := (z-1)\,(z^\gamma-1) - \frac{4\,\gamma}{(\gamma-1)^2}\,\big(z^{\gamma} - 2\,z^{\frac{\gamma+1}{2}} + z\big),
	\end{equation*}
	then it is sufficient to prove that $f(z)>0$ for all $z>1$. It is easy to recalculate that 
	\begin{align*}
	f'(z) &= (z^\gamma-1) + (z-1)\,\gamma\,z^{\gamma-1} - \frac{4\,\gamma}{(\gamma-1)^2}\,\big(\gamma\,z^{\gamma-1} - 	(\gamma+1)\,z^{\frac{\gamma-1}{2}} + 1\big), \\
	f''(z) &= \gamma\,z^{\gamma-1} + (z-1)\,\gamma\,(\gamma-1)\,z^{\gamma-2} + \gamma\,z^{\gamma-1} \\
	&\qquad- \frac{4\,\gamma}{(\gamma-1)^2}\,\Big(\gamma\,(\gamma-1)\,z^{\gamma-2} - (\gamma+1)\,\frac{\gamma-1}{2}\,z^{\frac{\gamma-3}{2}}\Big) \\
	&= \underbrace{\vphantom{\bigg(\bigg)}\gamma\,(\gamma+1)\,z^{\frac{\gamma-3}{2}}}_{>0}\,\underbrace{\bigg[z^{\frac{\gamma-1}{2}} \bigg( z - \frac{\gamma+1}{\gamma-1} \bigg) + \frac{2}{\gamma-1}\bigg]}_{=:g(z)}.
	\end{align*}
	
	Finally 
	\begin{align*}
	g'(z)&=\frac{\gamma-1}{2}\,z^{\frac{\gamma-3}{2}}\,\bigg( z - \frac{\gamma+1}{\gamma-1} \bigg) + z^{\frac{\gamma-1}{2}} \\
	&= \frac{\gamma+1}{2}\,z^{\frac{\gamma-3}{2}}\,\big( z - 1\big) \quad >\quad 0,
	\end{align*}
	what implies with $g(1)=0$ that $g(z)>0$ for $z>1$. Hence $f''(z)>0$ and with $f'(1)=f(1)=0$ we obtain the wanted property $f(z)>0$ for all $z>1$.
	
	It remains to consider the case $\gamma=1$. Here we have to show that 
	\begin{equation*}
	\text{log}\bigg(\frac{\varrho_+}{\varrho_-}\bigg) < \sqrt{\frac{\varrho_+}{\varrho_-}} - \sqrt{\frac{\varrho_-}{\varrho_+}}.
	\end{equation*}
	This inequality can be proved by similar methods, which we leave to the reader. \qed
\end{proof}

\begin{lemma}
	For all $\varrho_- < \varrho_M < \varrho_+$ the following inequality is fulfilled:
	\begin{equation}
	\sqrt{\frac{\big(\varrho_M - \varrho_-\big)\,\big(p(\varrho_M)-p(\varrho_-)\big)}{\varrho_-\,\varrho_M}} < \sqrt{\frac{\big(\varrho_+ - \varrho_-\big)\,\big(p(\varrho_+)-p(\varrho_-)\big)}{\varrho_-\,\varrho_+}}
	\label{eq:Lem3}
	\end{equation} 
	\label{lemma:rootroot}
\end{lemma}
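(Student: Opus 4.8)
The plan is to reduce the claimed inequality to a monotonicity statement. Since $p$ is strictly increasing (because $p'(\varrho)=K\,\gamma\,\varrho^{\gamma-1}>0$ for $\varrho>0$), both radicands are strictly positive under the given ordering $\varrho_-<\varrho_M<\varrho_+$. Hence \eqref{eq:Lem3} is equivalent to its squared version, and after cancelling the common positive factor $\varrho_-$ it suffices to show
\[
\frac{(\varrho_M-\varrho_-)\,\big(p(\varrho_M)-p(\varrho_-)\big)}{\varrho_M} \;<\; \frac{(\varrho_+-\varrho_-)\,\big(p(\varrho_+)-p(\varrho_-)\big)}{\varrho_+}.
\]

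Next I would introduce, with $\varrho_-$ held fixed, the function
\[
h(\varrho):=\frac{(\varrho-\varrho_-)\,\big(p(\varrho)-p(\varrho_-)\big)}{\varrho},\qquad \varrho>\varrho_-,
\]
so that the reduced inequality reads exactly $h(\varrho_M)<h(\varrho_+)$. The key observation is the factorization
\[
h(\varrho)=\Big(1-\frac{\varrho_-}{\varrho}\Big)\,\big(p(\varrho)-p(\varrho_-)\big),
\]
which exhibits $h$ as a product of two functions that are both strictly positive and strictly increasing on $(\varrho_-,\infty)$: the first factor has derivative $\varrho_-/\varrho^2>0$ and is positive for $\varrho>\varrho_-$, while the second has derivative $p'(\varrho)>0$ and vanishes at $\varrho_-$. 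Since the product of two strictly positive, strictly increasing functions is again strictly increasing, $h$ is strictly increasing on $(\varrho_-,\infty)$.

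Finally, because $\varrho_M<\varrho_+$ both lie in $(\varrho_-,\infty)$, monotonicity gives $h(\varrho_M)<h(\varrho_+)$, which is precisely the reduced inequality; reinstating the cancelled factor $\varrho_-$ and taking positive square roots then yields \eqref{eq:Lem3}.

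As for the main obstacle: there is essentially no hard analytic estimate here. The only real content is spotting the factorization $h(\varrho)=\big(1-\varrho_-/\varrho\big)\big(p(\varrho)-p(\varrho_-)\big)$, which turns the problem into a triviality and avoids differentiating the quotient defining $h$ directly --- a computation that, carried out with the explicit power law $p(\varrho)=K\,\varrho^\gamma$, is noticeably messier and obscures why the statement holds. I would also remark that this argument uses only that $p$ is strictly increasing, so it does not actually rely on the polytropic form of the pressure.
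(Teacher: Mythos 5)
Your proof is correct and follows essentially the same route as the paper: square the inequality, factor the resulting expression into $\bigl(1-\varrho_-/\varrho\bigr)\bigl(p(\varrho)-p(\varrho_-)\bigr)$ (the paper writes the equivalent $\bigl(\tfrac{1}{\varrho_-}-\tfrac{1}{\varrho}\bigr)\bigl(p(\varrho)-p(\varrho_-)\bigr)$), and observe that both factors are positive and increasing in $\varrho$ on $(\varrho_-,\infty)$. The only cosmetic difference is that you phrase the final step as monotonicity of a function $h$ while the paper directly compares the two factors at $\varrho_M$ and $\varrho_+$.
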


\begin{proof}
	It suffices to show that
	\begin{equation*}
	\frac{\big(\varrho_M - \varrho_-\big)\,\big(p(\varrho_M) - p(\varrho_-)\big)}{\varrho_M\,\varrho_-} < \frac{\big(\varrho_2 - \varrho_-\big)\,\big(p(\varrho_2) - p(\varrho_-)\big)}{\varrho_2\,\varrho_-}
	\end{equation*}
	which is equivalent to
	\begin{equation*}
	\bigg(\frac{1}{\varrho_-} - \frac{1}{\varrho_M}\bigg)\,\big(p(\varrho_M) - p(\varrho_-)\big) < \bigg(\frac{1}{\varrho_-} - \frac{1}{\varrho_2}\bigg)\,\big(p(\varrho_2) - p(\varrho_-)\big).
	\end{equation*}
	
	Since $\varrho_-<\varrho_M<\varrho_2$ and $p$ strictly increasing we obtain 
	\begin{equation*}
	0 < p(\varrho_M) - p(\varrho_-) < p(\varrho_2) - p(\varrho_-) \qquad\text{ and }\qquad 0 < \frac{1}{\varrho_-} - \frac{1}{\varrho_M} < \frac{1}{\varrho_-} - \frac{1}{\varrho_2}
	\end{equation*}
	and therefore the desired inequality \eqref{eq:Lem3}. \qed
\end{proof}


\section{The standard solution consists of a shock and a rarefaction}
\label{section:SR}

Now we are ready to begin with the main part of this paper. 

\begin{remark}
	Because of the rotational invariance of the Euler system, it is enough to consider the case where the standard solution consists of a 1-shock and a 3-rarefaction. If it is the other way round, we just rotate the coordinate system 180 degrees to obtain a new initial data
	\begin{equation}
	\begin{split}
	(\varrho_{-\,\text{new}},v_{-\,\text{new}}) &= (\varrho_+,-v_+) \\
	(\varrho_{+\,\text{new}},v_{+\,\text{new}}) &= (\varrho_-,-v_-).
	\end{split}
	\label{eq:rotinitSR}
	\end{equation}
	Note that the sign of the velocities changes during this transformation. In view of proposition \ref{prop:standardsolution} it is easy to check that the standard solution to the problem with rotated initial data \eqref{eq:rotinitSR} consists of a 1-shock and a 3-rarefaction.
\end{remark} 

Let the initial values $\varrho_\pm\in\mathbb{R}^+$ and $v_\pm\in\mathbb{R}^2$ be such that the standard solution consists of a 1-shock and a 3-rarefaction. By proposition \ref{prop:standardsolution} this means, that 
\begin{equation}
\begin{split}
\varrho_-&<\varrho_+\quad\text{ and}\\
-\sqrt{\frac{(\varrho_- - \varrho_+)\,\big(p(\varrho_-)-p(\varrho_+)\big)}{\varrho_-\,\varrho_+}} &< v_{+\,2} - v_{-\,2} < \int_{\varrho_-}^{\varrho_+}\frac{\sqrt{p'(r)}}{r}\,\dd r.
\end{split}
\label{eq:1S3R}
\end{equation}

\subsection{Existence of admissible fan subsolutions}

It was shown by Chiodaroli, De Lellis and Kreml \cite{chio15} that for one explicit example there exists an admissible fan subsolution and hence infinitely many admissibel weak solutions. Unfortunately there exist other examples where there are no admissible fan subsolutions, in other words, where we can not simply introduce a wedge with wild solutions. However we won't prove this here, since we want to show existence of infinitely many admissible weak solutions for \emph{all} examples of initial states that fulfill \eqref{eq:1S3R}. To achieve this, we need to slightly modify the approach in \cite{chio15}. 

First of all we want to find a criterion which tells us whether an admissible fan subsolution to given initial states, that fulfill \eqref{eq:1S3R}, exists or not. In order to do this we will rearrange the equations and inequalities in proposition \ref{prop:2alggl}. The latter proposition says that we have to find six real numbers that fulfill a set of four equations and five inequalities. As in \cite{chio14} the idea is now to choose two parameters and try to express the other four values as functions of these parameters, since there are four equations available. Because $\delta_2$ doesn't appear in equations \eqref{eq:2rhl1} - \eqref{eq:2rhr2}, it is a good choice to take $\delta_2$ as one parameter. We set $\rho_1$ to be the other parameter. We will be able to express $\mu_0$, $\mu_1$, $v_{1\,2}$ and $\delta_1$ as functions of $\rho_1$.

\begin{theorem}
	There exists an admissible fan subsolution to the Cauchy problem \eqref{eq:euler}, \eqref{eq:ourinit} if and only if there exist constants $\varrho_1,\delta_2\in\mathbb{R}^+$ that fulfill
	\begin{align} 
	\varrho_- &< \varrho_1 < \varrho_+, \label{eq:SRcond1} \\[0.3cm]
	\delta_1^\star(\varrho_1) &>0, \label{eq:SRcond2} 
	\end{align} 
	\vspace{-0.7cm}
	\begin{align}
	\begin{split}
	&(v_{1\,2}^\star(\varrho_1)-v_{-\,2})\,\bigg(p(\varrho_-)+p(\varrho_1)-2\,\varrho_-\,\varrho_1\,\frac{\varepsilon(\varrho_-)-\varepsilon(\varrho_1)}{\varrho_- -\varrho_1}\bigg) \\
	&\qquad\leq\delta_1^\star(\varrho_1)\,\varrho_1\,(v_{1\,2}^\star(\varrho_1)+v_{-\,2})-(\delta_1^\star(\varrho_1)+\delta_2)\,\frac{\varrho_-\,\varrho_1\,(v_{1\,2}^\star(\varrho_1)-v_{-\,2})}{\varrho_- -\varrho_1},
	\end{split} \label{eq:SRcond3} \\[0.5cm] 
	\begin{split}
	&(v_{+\,2}-v_{1\,2}^\star(\varrho_1))\,\bigg(p(\varrho_1)+p(\varrho_+)-2\,\varrho_1\,\varrho_+\,\frac{\varepsilon(\varrho_1)-\varepsilon(\varrho_+)}{\varrho_1 -\varrho_+}\bigg) \\
	&\qquad\leq-\delta_1^\star(\varrho_1)\,\varrho_1\,(v_{+\,2}+v_{1\,2}^\star(\varrho_1))+(\delta_1^\star(\varrho_1)+\delta_2)\,\frac{\varrho_1\,\varrho_+\,(v_{+\,2}-v_{1\,2}^\star(\varrho_1))}{\varrho_1 -\varrho_+},
	\end{split} \label{eq:SRcond4}
	\end{align}
	where we define the functions
	\begin{align}
	&v_{1\,2}^\star(\varrho_1) := \frac{1}{\varrho_1\,(\varrho_- - \varrho_+)}\,\Bigg(-\varrho_-\,v_{-\,2}\,(\varrho_+ - \varrho_1) - \varrho_+\,v_{+\,2}\,(\varrho_1 - \varrho_-) \notag\\
	&\ \ + \sqrt{\Big[(\varrho_- - \varrho_+)\,\big(p(\varrho_-) - p(\varrho_+)\big) - \varrho_+\,\varrho_-\,(v_{-\,2} - v_{+\,2})^2\Big]\,(\varrho_1 - \varrho_-)\,(\varrho_+ - \varrho_1)}\Bigg)  \label{eq:SRdefnv12}
	\end{align}
	and
	\begin{equation}
	\begin{split}
	&\delta_1^\star(\varrho_1) := -\frac{p(\varrho_1) - p(\varrho_-)}{\varrho_1} + \frac{\varrho_-\,(\varrho_1 - \varrho_-)}{\varrho_1^2\,(\varrho_- - \varrho_+)^2}\,\Bigg(\varrho_+\,(v_{-\,2} - v_{+\,2}) \\
	&\ \ + \sqrt{\Big[(\varrho_- - \varrho_+)\,\big(p(\varrho_-) - p(\varrho_+)\big) - \varrho_+\,\varrho_-\,(v_{-\,2} - v_{+\,2})^2\Big]\,\frac{\varrho_+ - \varrho_1}{\varrho_1 - \varrho_-}}\Bigg)^2.  
	\end{split}
	\label{eq:SRdefndelta1}
	\end{equation} 
	
	Note that these functions are well-defined for $\varrho_- < \varrho_1 < \varrho_+$ and for initial states $(\varrho_\pm,v_\pm)$ fulfilling \eqref{eq:1S3R}, which will be shown in the proof. 
	\label{thm:SRsubs}
\end{theorem}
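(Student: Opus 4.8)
The plan is to read Theorem \ref{thm:SRsubs} off Proposition \ref{prop:2alggl} by taking $\varrho_1$ and $\delta_2$ as free parameters and solving the four Rankine--Hugoniot equations \eqref{eq:2rhl1}--\eqref{eq:2rhr2} for $\mu_0,\mu_1,v_{1\,2}$ and $\delta_1$. First I would dispose of the degenerate values $\varrho_1=\varrho_-$ and $\varrho_1=\varrho_+$: substituting either into \eqref{eq:2rhl1}--\eqref{eq:2rhl2}, respectively \eqref{eq:2rhr1}--\eqref{eq:2rhr2}, forces $\delta_1=0$, contradicting the subsolution condition \eqref{eq:2sc1}. Hence $\varrho_1\neq\varrho_\pm$ always, and I may solve \eqref{eq:2rhl1} and \eqref{eq:2rhr1} for $\mu_0=\frac{\varrho_-v_{-\,2}-\varrho_1 v_{1\,2}}{\varrho_--\varrho_1}$ and $\mu_1=\frac{\varrho_1 v_{1\,2}-\varrho_+ v_{+\,2}}{\varrho_1-\varrho_+}$. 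Plugging these into \eqref{eq:2rhl2} and \eqref{eq:2rhr2} gives two expressions for $\varrho_1\delta_1$; eliminating $\delta_1$ by equating them produces a single quadratic $F(v_{1\,2})=0$ in $v_{1\,2}$ (with $\varrho_1$ as parameter), whose two roots are \eqref{eq:SRdefnv12} (the $+$ sign in front of the root) and its companion with the opposite sign, and back-substitution into the left-interface expression yields the closed form \eqref{eq:SRdefndelta1}. These are bookkeeping steps; I would record only the leading coefficient of $F$, which equals $-\varrho_1^2\,\frac{\varrho_--\varrho_+}{(\varrho_--\varrho_1)(\varrho_1-\varrho_+)}>0$ for $\varrho_-<\varrho_1<\varrho_+$.

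Next I would prove the well-definedness asserted at the end of the theorem and, at the same time, the necessity of \eqref{eq:SRcond1}. The radicand in \eqref{eq:SRdefnv12} is the product of $N:=(\varrho_--\varrho_+)(p(\varrho_-)-p(\varrho_+))-\varrho_+\varrho_-(v_{-\,2}-v_{+\,2})^2$ with $(\varrho_1-\varrho_-)(\varrho_+-\varrho_1)$, and analogously in \eqref{eq:SRdefndelta1} with $\frac{\varrho_+-\varrho_1}{\varrho_1-\varrho_-}$. The lower bound in \eqref{eq:1S3R}, combined with Lemma \ref{lemma:introot} applied to its upper bound, yields $(v_{+\,2}-v_{-\,2})^2<\frac{(\varrho_--\varrho_+)(p(\varrho_-)-p(\varrho_+))}{\varrho_-\varrho_+}$, i.e.\ $N>0$; since $(\varrho_1-\varrho_-)(\varrho_+-\varrho_1)>0$ on $(\varrho_-,\varrho_+)$, both radicands are positive. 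The same computation shows the discriminant of $F$ is a positive multiple of $N\,(\varrho_1-\varrho_-)(\varrho_+-\varrho_1)$, so a real root $v_{1\,2}$ forces $(\varrho_1-\varrho_-)(\varrho_+-\varrho_1)\geq 0$, hence $\varrho_-\leq\varrho_1\leq\varrho_+$; the endpoints are again excluded because they give $\delta_1=0$, so \eqref{eq:SRcond1} is necessary.

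The crucial step, which I expect to be the main obstacle, is matching the order condition \eqref{eq:2order} with the choice of root. A short computation shows $\mu_1-\mu_0$ is an affine, strictly decreasing function of $v_{1\,2}$ with a unique zero at the value $v_{1\,2}^0$ at which $\mu_0=\mu_1=\frac{\varrho_-v_{-\,2}-\varrho_+v_{+\,2}}{\varrho_--\varrho_+}$. Evaluating $F$ there, the quadratic collapses and one finds that $F(v_{1\,2}^0)$ is a negative multiple of $N$, hence strictly negative. Because $F$ has positive leading coefficient, $F(v_{1\,2}^0)<0$ places $v_{1\,2}^0$ strictly between the two roots; by monotonicity of $\mu_1-\mu_0$ the smaller root satisfies $\mu_0<\mu_1$ while the larger violates it. Since the prefactor $\frac{1}{\varrho_1(\varrho_--\varrho_+)}$ in \eqref{eq:SRdefnv12} is negative, the smaller root is exactly $v_{1\,2}^\star(\varrho_1)$. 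This single estimate verifies \eqref{eq:2order} for the candidate built from \eqref{eq:SRdefnv12} and forces any given subsolution onto that same root.

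With these pieces the equivalence closes. For the ``if'' direction, given $\varrho_1,\delta_2$ satisfying \eqref{eq:SRcond1}--\eqref{eq:SRcond4}, I set $v_{1\,2}=v_{1\,2}^\star(\varrho_1)$, $\delta_1=\delta_1^\star(\varrho_1)$ and $\mu_0,\mu_1$ as above: then \eqref{eq:2rhl1}--\eqref{eq:2rhr2} hold by construction (the two equations \eqref{eq:2rhl2}, \eqref{eq:2rhr2} because $v_{1\,2}^\star$ solves $F=0$), \eqref{eq:2order} holds by the previous paragraph, \eqref{eq:2sc1}--\eqref{eq:2sc2} are \eqref{eq:SRcond2} and $\delta_2>0$, and \eqref{eq:2adml}--\eqref{eq:2admr} are literally \eqref{eq:SRcond3}--\eqref{eq:SRcond4}, so Proposition \ref{prop:2alggl} delivers a fan subsolution. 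For the ``only if'' direction a fan subsolution supplies parameters obeying Proposition \ref{prop:2alggl}; the discriminant argument gives \eqref{eq:SRcond1}, the order condition forces $v_{1\,2}=v_{1\,2}^\star(\varrho_1)$ and $\delta_1=\delta_1^\star(\varrho_1)$, and then \eqref{eq:2sc1}, \eqref{eq:2adml}, \eqref{eq:2admr} turn into \eqref{eq:SRcond2}, \eqref{eq:SRcond3}, \eqref{eq:SRcond4}.
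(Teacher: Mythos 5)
Your proposal is correct and follows essentially the same route as the paper: reduce to Proposition \ref{prop:2alggl} with $\varrho_1,\delta_2$ as free parameters, solve the four Rankine--Hugoniot relations to get a quadratic whose radicand is $N\,(\varrho_1-\varrho_-)(\varrho_+-\varrho_1)$ with $N>0$ by Lemma \ref{lemma:introot} and \eqref{eq:1S3R} (forcing \eqref{eq:SRcond1}), use the order condition \eqref{eq:2order} to select the root giving \eqref{eq:SRdefnv12}--\eqref{eq:SRdefndelta1}, and translate the remaining inequalities into \eqref{eq:SRcond2}--\eqref{eq:SRcond4}. The only differences are cosmetic --- you take $v_{1\,2}$ rather than $\mu_0$ as the unknown of the quadratic and argue the sign selection via monotonicity of $\mu_1-\mu_0$ instead of the paper's direct comparison with $\tfrac{\varrho_-v_{-\,2}-\varrho_+v_{+\,2}}{\varrho_--\varrho_+}$, and you handle the degenerate cases $\varrho_1=\varrho_\pm$ a bit more explicitly.
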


\begin{remark}
	In this theorem we have an ``if and only if'' statement. This is the reason why we denote the functions defined in \eqref{eq:SRdefnv12} and \eqref{eq:SRdefndelta1} as $v_{1\,2}^\star,\delta_1^\star$ and not simply $v_{1\,2},\delta_1$. If an admissible fan subsolution is given, then it is a priori not clear that the $v_{1\,2},\delta_1$ given by the admissible fan subsolution are equal to the $v_{1\,2}^\star,\delta_1^\star$ defined in \eqref{eq:SRdefnv12} and \eqref{eq:SRdefndelta1}.
\end{remark} 

\begin{proof}
	Suppose there is an admissible fan subsolution. By proposition \ref{prop:2alggl} there exist constants $\mu_0,\mu_1\in\mathbb{R}$, $\rho_1\in\mathbb{R}^+$, $v_{1\,2}\in\mathbb{R}$ and $\delta_1,\delta_2\in\mathbb{R}$ such that \eqref{eq:2order}-\eqref{eq:2admr} hold. From \eqref{eq:2sc2} we have $\delta_2\in\mathbb{R}^+$. 
	
	Adding \eqref{eq:2rhl1} and \eqref{eq:2rhr1} and solving the result for $\mu_1$ leads to
	\begin{equation}
	\mu_1 = \frac{\rho_-\,v_{-\,2} - \rho_+\,v_{+\,2} - \mu_0\,(\rho_- - \rho_1)}{\rho_1 - \rho_+}.
	\label{eq:temp41}
	\end{equation}
	
	Next we add \eqref{eq:2rhl2} and \eqref{eq:2rhr2} and use \eqref{eq:2rhl1} and \eqref{eq:2rhr1} to obtain
	\begin{equation*}
	\mu_0^2\,(\rho_- - \rho_1) + \mu_1^2\,(\rho_1 - \rho_+) = \rho_-\,v_{-\,2}^2 - \rho_+\,v_{+\,2}^2 + p(\rho_-) - p(\rho_+).
	\end{equation*}
	If we use \eqref{eq:temp41} to eliminate $\mu_1$ and solve for $\mu_0$ we get
	\begin{equation}
	\begin{split}
	\mu_0 &= \frac{\rho_-\,v_{-\,2} - \rho_+\,v_{+\,2}}{\rho_- - \rho_+} \\
	&\quad\pm \frac{1}{\rho_- - \rho_+}\,\sqrt{\Big[(\rho_- - \rho_+)\,\big(p(\rho_-) - p(\rho_+)\big) - \rho_+\,\rho_-\,(v_{-\,2} - v_{+\,2})^2\Big]\,\frac{\rho_+ - \rho_1}{\rho_1 - \rho_-}}.
	\end{split}
	\label{eq:SRnu0pm}
	\end{equation}
	
	Using this result and \eqref{eq:temp41} one has
	\begin{equation}
	\begin{split}
	\mu_1 &= \frac{\rho_-\,v_{-\,2} - \rho_+\,v_{+\,2}}{\rho_- - \rho_+} \\
	&\quad\mp \frac{1}{\rho_- - \rho_+}\,\sqrt{\Big[(\rho_- - \rho_+)\,\big(p(\rho_-) - p(\rho_+)\big) - \rho_+\,\rho_-\,(v_{-\,2} - v_{+\,2})^2\Big]\,\frac{\rho_1 - \rho_-}{\rho_+ - \rho_1}},
	\end{split}
	\label{eq:SRnu1pm}
	\end{equation}
	where the signs in the last two equations have to be opposite. 
	
	Lemma \ref{lemma:introot} and equation \eqref{eq:1S3R} yield that 
	\begin{equation*}
	\frac{(\rho_- - \rho_+)\,\big(p(\rho_-)-p(\rho_+)\big)}{\rho_-\,\rho_+} > (v_{-\,2} - v_{+\,2})^2.
	\end{equation*}
	This is equivalent to 
	\begin{equation*}
	(\rho_- - \rho_+)\,\big(p(\rho_-) - p(\rho_+)\big) - \rho_+\,\rho_-\,(v_{-\,2} - v_{+\,2})^2>0.
	\end{equation*}
	
	Hence \eqref{eq:SRnu0pm} and \eqref{eq:SRnu1pm} yield that $\rho_+ - \rho_1$ and $\rho_1 - \rho_-$ have the same sign. Because $\rho_- < \rho_+$, we have $\rho_- < \rho_1 < \rho_+$, i.e. \eqref{eq:SRcond1}. Now we want to choose the correct signs in the equations for $\mu_0$ and $\mu_1$, i.e. in \eqref{eq:SRnu0pm} and \eqref{eq:SRnu1pm}. Assume we had a ``$-$'' in \eqref{eq:SRnu0pm} and therefore a ``$+$'' in \eqref{eq:SRnu1pm}. Then 
	\begin{equation*}
	\mu_0 > \frac{\rho_-\,v_{-\,2} - \rho_+\,v_{+\,2}}{\rho_- - \rho_+} > \mu_1,
	\end{equation*}
	since $\rho_- - \rho_+ <0$. This is a contradiction to \eqref{eq:2order}. Hence the proper sign in \eqref{eq:SRnu0pm} is ``$+$'' and in \eqref{eq:SRnu1pm} it is ``$-$'', i.e.
	\begin{align}
	\begin{split}
	\mu_0 &= \frac{\rho_-\,v_{-\,2} - \rho_+\,v_{+\,2}}{\rho_- - \rho_+} \\
	&\quad + \frac{1}{\rho_- - \rho_+}\,\sqrt{\Big[(\rho_- - \rho_+)\,\big(p(\rho_-) - p(\rho_+)\big) - \rho_+\,\rho_-\,(v_{-\,2} - v_{+\,2})^2\Big]\,\frac{\rho_+ - \rho_1}{\rho_1 - \rho_-}},
	\end{split}
	\label{eq:SRdefnnu0} \\
	\begin{split}
	\mu_1 &= \frac{\rho_-\,v_{-\,2} - \rho_+\,v_{+\,2}}{\rho_- - \rho_+} \\
	&\quad - \frac{1}{\rho_- - \rho_+}\,\sqrt{\Big[(\rho_- - \rho_+)\,\big(p(\rho_-) - p(\rho_+)\big) - \rho_+\,\rho_-\,(v_{-\,2} - v_{+\,2})^2\Big]\,\frac{\rho_1 - \rho_-}{\rho_+ - \rho_1}}.
	\end{split}
	\label{eq:SRdefnnu1}
	\end{align}
	
	Next we compute $v_{1\,2}$ using \eqref{eq:SRdefnnu0} and \eqref{eq:2rhl1} and get 
	\begin{align*}
	&v_{1\,2} = \frac{1}{\rho_1\,(\rho_- - \rho_+)}\,\Bigg(-\rho_-\,v_{-\,2}\,(\rho_+ - \rho_1) - \rho_+\,v_{+\,2}\,(\rho_1 - \rho_-) \\
	&\quad+ \sqrt{\Big[(\rho_- - \rho_+)\,\big(p(\rho_-) - p(\rho_+)\big) - \rho_+\,\rho_-\,(v_{-\,2} - v_{+\,2})^2\Big]\,(\rho_1 - \rho_-)\,(\rho_+ - \rho_1)}\Bigg).
	\end{align*}
	
	With \eqref{eq:2rhl2} we finally find 
	\begin{align*}
	\delta_1 &= -\frac{p(\rho_1) - p(\rho_-)}{\rho_1} + \frac{\rho_-\,(\rho_1 - \rho_-)}{\rho_1^2\,(\rho_- - \rho_+)^2}\,\Bigg(\rho_+\,(v_{-\,2} - v_{+\,2}) \\
	&\quad+ \sqrt{\Big[(\rho_- - \rho_+)\,\big(p(\rho_-) - p(\rho_+)\big) - \rho_+\,\rho_-\,(v_{-\,2} - v_{+\,2})^2\Big]\,\frac{\rho_+ - \rho_1}{\rho_1 - \rho_-}}\Bigg)^2.
	\end{align*}
	
	Hence we have $\delta_1=\delta_1^\star(\varrho_1)$ and $v_{1\,2}=v_{1\,2}^\star(\varrho_1)$. From \eqref{eq:2sc1} we obtain \eqref{eq:SRcond2} and the admissi\-bi\-li\-ty conditions \eqref{eq:2adml} and \eqref{eq:2admr} yield \eqref{eq:SRcond3} and \eqref{eq:SRcond4}. 
	
	It remains to prove the converse. Let $\rho_1,\delta_2\in\mathbb{R}^+$ such that \eqref{eq:SRcond1} - \eqref{eq:SRcond4} hold. Define $v_{1\,2}=v_{1\,2}^\star(\varrho_1)$, $\delta_1=\delta_1^\star(\varrho_1)$ and $\mu_0,\mu_1$ through \eqref{eq:SRdefnnu0}, resp. \eqref{eq:SRdefnnu1}. By easy computations one can check that $\rho_1,\delta_2$ together with $\mu_0,\mu_1,v_{1\,2},\delta_1$ fulfill the conditions \eqref{eq:2order} - \eqref{eq:2admr} and therefore define an admissible fan subsolution according to proposition \ref{prop:2alggl}. \qed
\end{proof}

As already mentioned it turns out that there does not always exist an admissible fan subsolution. Nevertheless we can prove existence of infinitely many solutions. The idea is to work with an auxiliary state.

\subsection{An auxiliary state}

\begin{theorem}
	Assume that \eqref{eq:1S3R} holds. Then there exist infinitely many admissible weak solutions to \eqref{eq:euler}, \eqref{eq:ourinit}.
	\label{thm:SRexistence}
\end{theorem}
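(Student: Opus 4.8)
The strategy is to leave the 3-rarefaction of the standard solution essentially untouched and to replace only the 1-shock (together with an arbitrarily short initial piece of the rarefaction) by a wild fan, the junction between the two being an \emph{auxiliary state}. Let $(\varrho_M,v_M)$ denote the intermediate state of the standard solution furnished by Proposition~\ref{prop:standardsolution}, case~5, so that $\varrho_-<\varrho_M<\varrho_+$, the pair $(\varrho_-,v_-)$, $(\varrho_M,v_M)$ is joined by a 1-shock and $(\varrho_M,v_M)$, $(\varrho_+,v_+)$ by an admissible 3-rarefaction. I would fix an \emph{auxiliary state} $(\varrho_a,v_a)$ on this 3-rarefaction curve with $\varrho_a>\varrho_M$ close to $\varrho_M$; the outer portion of the rarefaction, joining $(\varrho_a,v_a)$ to $(\varrho_+,v_+)$, will be kept, and everything to its left will be replaced.

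First I would record that the \emph{auxiliary Riemann problem} with left state $(\varrho_-,v_-)$ and right state $(\varrho_a,v_a)$ again satisfies \eqref{eq:1S3R} (its standard solution is a 1-shock followed by a short 3-rarefaction), so that Theorem~\ref{thm:SRsubs} is applicable to it. The heart of the argument is then to verify the criterion \eqref{eq:SRcond1}--\eqref{eq:SRcond4} for this auxiliary problem. The point is that in the limit $\varrho_a\downarrow\varrho_M$ the auxiliary problem degenerates to the \emph{single} 1-shock joining $(\varrho_-,v_-)$ to $(\varrho_M,v_M)$, for which one can exhibit $\varrho_1,\delta_2$ satisfying \eqref{eq:SRcond1}--\eqref{eq:SRcond4} with strict inequalities, the admissibility conditions \eqref{eq:SRcond3}, \eqref{eq:SRcond4} being controlled by Lemmas~\ref{lemma:auschiokreml}, \ref{lemma:introot} and \ref{lemma:rootroot}. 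Since the quantities $v_{1\,2}^\star,\delta_1^\star$ and the two sides of \eqref{eq:SRcond2}--\eqref{eq:SRcond4} depend continuously on the data $(\varrho_a,v_a)$, the strict inequalities persist for $\varrho_a$ sufficiently close to $\varrho_M$. This is exactly the step the direct approach cannot take: forcing a single constant-density fan to bridge the \emph{entire} rarefaction amounts to replacing an expansion by a compression and violates \eqref{eq:SRcond4}, whereas here the rarefaction that the fan must bridge is made as short as we like.

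By Theorem~\ref{thm:condition} the admissible fan subsolution of the auxiliary problem yields infinitely many admissible weak solutions $(\varrho,v)$ of that problem, each equal to $(\varrho_-,v_-)$ for $x_2/t<\mu_0$, wild (with $\varrho=\varrho_1$ and $|v|^2=C_1$) in the fan $\mu_0<x_2/t<\mu_1$, and equal to the constant state $(\varrho_a,v_a)$ for $x_2/t>\mu_1$. Next I would check the speed ordering $\mu_1<\lambda_3(\varrho_a,v_a)=v_{a\,2}+\sqrt{p'(\varrho_a)}$, i.e.\ that the right edge of the fan stays below the left edge of the kept outer rarefaction; this again follows in the limit $\varrho_a\downarrow\varrho_M$, because $\mu_1$ is a speed of $1$-wave type, which for the present states lies strictly below the $3$-rarefaction edge, and then persists by continuity. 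Granting this, I define the candidate solution of the original problem to be the wild auxiliary solution for $x_2/t<\lambda_3(\varrho_a,v_a)$ (the wedge $\mu_1<x_2/t<\lambda_3(\varrho_a,v_a)$ being filled by the constant state $(\varrho_a,v_a)$) and the outer $3$-rarefaction joining $(\varrho_a,v_a)$ to $(\varrho_+,v_+)$ for $x_2/t>\lambda_3(\varrho_a,v_a)$.

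It remains to verify that the glued function is an admissible weak solution in the sense of Definitions~\ref{defn:weak} and \ref{defn:adm}: the only interface introduced by the gluing is the line $x_2/t=\lambda_3(\varrho_a,v_a)$, across which the trace is \emph{continuous} (both sides equal $(\varrho_a,v_a)$), so the weak identities and the entropy inequality are simply inherited from the auxiliary solution on the left and from the admissible rarefaction on the right. As $t\downarrow0$ the candidate attains the data $(\varrho_-,v_-)$ for $x_2<0$ and $(\varrho_+,v_+)$ for $x_2>0$, i.e.\ precisely \eqref{eq:ourinit}; and since the auxiliary problem already carries infinitely many distinct wild solutions, so does \eqref{eq:euler}, \eqref{eq:ourinit}. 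The main obstacle is the double requirement in the middle step: one and the same choice of $\varrho_1,\delta_2$ must satisfy the strict subsolution and admissibility inequalities \eqref{eq:SRcond2}--\eqref{eq:SRcond4} of the auxiliary problem \emph{and} keep the fan speed $\mu_1$ below $\lambda_3(\varrho_a,v_a)$; both are obtained here through the same continuity argument based on the single-shock limit $\varrho_a\downarrow\varrho_M$.
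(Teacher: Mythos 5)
Your architecture coincides with the paper's: pick an auxiliary state on the 3-rarefaction curve just above $U_M$, produce a wild fan for the truncated Riemann problem from $U_-$ to that state, and glue it to the residual rarefaction. However, the step you describe as "one can exhibit $\varrho_1,\delta_2$ satisfying \eqref{eq:SRcond1}--\eqref{eq:SRcond4} with strict inequalities" for the degenerate single-shock configuration $U_-\to U_M$ is precisely the technical heart of the matter (the paper isolates it as Lemma~\ref{lemma:SRperturb}), and it is not delivered by the lemmas you cite: Lemmas~\ref{lemma:introot} and \ref{lemma:rootroot} are used only to show the square root in $v_{1\,2}^\star,\delta_1^\star$ is real and that the auxiliary Riemann problem is again of shock--rarefaction type. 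The genuine difficulty is that, at $\delta_2=0$ and right state $U_M$, the two admissibility functionals (call them $A$ and $B$, the differences of the two sides of \eqref{eq:SRcond3} and \eqref{eq:SRcond4}) cannot both be made strictly positive by an "obvious" choice of $\varrho_1$: $A$ vanishes identically at $\varrho_1=\widetilde\varrho_-$ and $B$ at $\varrho_1=\widetilde\varrho_M$, so one must compute the sign of $\partial_{\varrho_1}A$ at $\widetilde\varrho_-$ (resp.\ $\partial_{\varrho_1}B$ at $\widetilde\varrho_M$), and which endpoint works depends on whether $2\,\widetilde v_{-\,2}(\widetilde\varrho_M-\widetilde\varrho_-)$ exceeds $\widetilde\varrho_M R$ with $R$ the shock strength; Lemma~\ref{lemma:auschiokreml} only handles the non-degenerate functional in each case. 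Without this case analysis your continuity argument has no strict inequality to perturb from, so as written the proof has a gap at its central step.

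A second, smaller gap is the speed ordering $\mu_1<\lambda_3(\varrho_a,v_a)$. The assertion that "$\mu_1$ is a speed of 1-wave type" is not a proof ($\mu_1$ is the right edge of the turbulent fan, which sits where the contact/middle region of the standard solution would be, not near the 1-wave). The paper instead derives the bound algebraically: eliminating $v_{1\,2}$ from the right-interface Rankine--Hugoniot conditions \eqref{eq:2rhr1}, \eqref{eq:2rhr2} and using $\delta_1>0$ gives
\begin{equation*}
(\mu_1-v_{2\,2})^2<\frac{\varrho_1}{\varrho_2}\,\frac{p(\varrho_1)-p(\varrho_2)}{\varrho_1-\varrho_2}\leq p'(\varrho_2),
\end{equation*}
using $\varrho_1<\varrho_2$ (which follows from the lemma's conclusion $\varrho_1<\widetilde\varrho_M$) and convexity of $p$. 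This holds for every admissible fan subsolution of the truncated problem, so no additional limiting argument is needed for this step; you should replace your heuristic by this computation or an equivalent one.
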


For convenience we will from now on use the notation $\mathcal{P}:=\mathbb{R}^+\times\mathbb{R}^2$ for the phase space and $U:=(\rho,v)\in\mathcal{P}$ for a state. 

\begin{definition}
	Consider the 3-dimensional phase space $\mathcal{P}=\mathbb{R}^+\times\mathbb{R}^2$. We denote a 2-dimensional ball with center $\widetilde{U}_M=(\widetilde{\varrho}_M,\widetilde{v}_M)\in\mathcal{P}$ and radius $r>0$ as 
	\begin{equation*}
	B_r(\widetilde{U}_M):=\big\{(\varrho,v)\in\mathcal{P}\,\big|\,v_1=\widetilde{v}_{M\,1},\|(\varrho,v)-(\widetilde{\varrho}_M,\widetilde{v}_M)\|<r\big\}.
	\end{equation*}
\end{definition}

To prove the theorem we will need the following lemma. We will forget about the given initial states $U_-=(\varrho_-,v_-)$, $U_+=(\varrho_+,v_+)$ for a moment.

\begin{lemma}
	Let $\widetilde{U}_-=(\widetilde{\varrho}_-,\widetilde{v}_-)\in\mathcal{P}$ be any given state and $\widetilde{U}_M=(\widetilde{\varrho}_M,\widetilde{v}_M)\in\mathcal{P}$ a state that can be connected to $\widetilde{U}_-$ by a 1-shock. Then there exists a radius $r>0$ with the following property: \\
	If $\widetilde{U}_+=(\widetilde{\varrho}_+,\widetilde{v}_+)\in\mathcal{P}$ is a state that fulfills
	\begin{itemize}
		\item $\widetilde{\varrho}_+>\widetilde{\varrho}_M$,
		\item $\widetilde{U}_+\in B_r(\widetilde{U}_M)$ and
		\item the standard solution to the problem \eqref{eq:euler}, \eqref{eq:ourinit} with $\widetilde{U}_-$ and $\widetilde{U}_+$ as initial states consists of a 1-shock and a 3-rarefaction,
	\end{itemize}
	then there exists an admissible fan subsolution to the problem \ref{eq:euler}, \ref{eq:ourinit} with $\widetilde{U}_-$ and $\widetilde{U}_+$ as initial states. In addition to that the density $\varrho_1$ that appears in the admissible fan subsolution fulfills $\varrho_1<\widetilde{\varrho}_M$.
	\label{lemma:SRperturb}
\end{lemma}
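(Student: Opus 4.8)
The plan is to exploit Theorem \ref{thm:SRsubs}, which reduces the existence of an admissible fan subsolution to finding a density $\varrho_1\in(\widetilde{\varrho}_-,\widetilde{\varrho}_+)$ and a parameter $\delta_2>0$ satisfying the four conditions \eqref{eq:SRcond1}--\eqref{eq:SRcond4}. The key idea is a limiting/continuity argument: I would first identify the candidate that works in the \emph{degenerate} situation $\widetilde{U}_+=\widetilde{U}_M$, where by hypothesis $\widetilde{U}_-$ and $\widetilde{U}_M$ are connected by a 1-shock, and then argue that a genuine subsolution persists for $\widetilde{U}_+$ in a small enough ball. Concretely, I would look at the formulas \eqref{eq:SRdefnv12} and \eqref{eq:SRdefndelta1} for $v_{1\,2}^\star$ and $\delta_1^\star$ in the limit $\varrho_1\to\widetilde{\varrho}_-$: as $\varrho_1$ approaches the left state the square-root terms vanish (the factor $\varrho_+-\varrho_1$ survives but $\varrho_1-\varrho_-\to0$ in one place and the reverse in the other), so that $v_{1\,2}^\star(\varrho_1)\to\widetilde{v}_{-\,2}$ and $\delta_1^\star(\varrho_1)\to 0$. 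Thus near $\varrho_1=\widetilde{\varrho}_-$ the state $(\varrho_1,v_{1\,2}^\star)$ degenerates to the left state, which is exactly where one expects the shock structure to be recoverable.

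The heart of the argument is the following. When $\widetilde{U}_+$ is \emph{exactly} the 1-shock state $\widetilde{U}_M$ (so there is no 3-rarefaction and the Riemann solution is a single 1-shock as in case 6 of Proposition \ref{prop:standardsolution}), the Rankine--Hugoniot relations \eqref{eq:2rhl1}--\eqref{eq:2rhr2} admit a degenerate solution in which the intermediate density $\varrho_1$ coincides with $\widetilde{\varrho}_-$, $\delta_1=0$, and the two interface speeds $\mu_0,\mu_1$ both equal the classical 1-shock speed; the admissibility inequalities \eqref{eq:2adml}, \eqref{eq:2admr} then reduce to the entropy condition satisfied by the 1-shock, and hold with a definite \emph{strict} margin because the 1-shock is a genuinely dissipative discontinuity (here one invokes Lemma \ref{lemma:auschiokreml} to control the sign of the pressure--energy combination). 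The point is that each of the four inequalities \eqref{eq:SRcond1}--\eqref{eq:SRcond4} I need for an honest subsolution is an \emph{open} condition, and at the degenerate point I can make them strict by choosing $\varrho_1$ slightly larger than $\widetilde{\varrho}_-$ (forcing $\delta_1^\star(\varrho_1)>0$ by the monotonicity visible in \eqref{eq:SRdefndelta1}) and $\delta_2>0$ small. Since $v_{1\,2}^\star$ and $\delta_1^\star$ depend continuously on the data $(\widetilde{\varrho}_+,\widetilde{v}_+)$ as well as on $\varrho_1$, a strict inequality achieved at $\widetilde{U}_+=\widetilde{U}_M$ survives under a perturbation of $\widetilde{U}_+$ of size at most some $r>0$. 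This $r$ is the radius claimed in the lemma, and by construction the selected $\varrho_1$ can be kept arbitrarily close to $\widetilde{\varrho}_-<\widetilde{\varrho}_M$, which secures the extra assertion $\varrho_1<\widetilde{\varrho}_M$.

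I would organize the write-up as: (i) record the limits of $v_{1\,2}^\star(\varrho_1)$ and $\delta_1^\star(\varrho_1)$ as $\varrho_1\downarrow\widetilde{\varrho}_-$, showing $\delta_1^\star>0$ for $\varrho_1$ just above $\widetilde{\varrho}_-$; (ii) verify the two admissibility inequalities \eqref{eq:SRcond3}, \eqref{eq:SRcond4} hold with room to spare at $\widetilde{U}_+=\widetilde{U}_M$, using that the pure 1-shock is strictly entropy-admissible together with Lemma \ref{lemma:auschiokreml}; (iii) invoke continuity of all the expressions in \eqref{eq:SRcond1}--\eqref{eq:SRcond4} jointly in $(\varrho_1,\delta_2,\widetilde{U}_+)$ to produce the uniform radius $r$.

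The main obstacle I anticipate is step (ii): checking that the admissibility inequalities are \emph{strictly} satisfied in the degenerate limit, rather than merely saturated. Because \eqref{eq:SRcond3}, \eqref{eq:SRcond4} reduce, as $\widetilde{U}_+\to\widetilde{U}_M$ and $\varrho_1\to\widetilde{\varrho}_-$, to the entropy inequality for the single 1-shock, one must confirm that this inequality is strict (a known fact for genuinely nonlinear shocks of the isentropic system, traceable to the strict convexity of the energy and the sign information in Lemma \ref{lemma:auschiokreml}) and then track how the small positive quantities $\delta_1^\star(\varrho_1)$ and $\delta_2$ enter to preserve strictness. The delicate bookkeeping is that $\delta_1^\star$ appears on both sides of \eqref{eq:SRcond3}, \eqref{eq:SRcond4}, so one must verify that the leading-order contribution of $\delta_1^\star$, together with the slack in the shock's entropy inequality, keeps the correct sign uniformly as $\widetilde{U}_+$ ranges over $B_r(\widetilde{U}_M)$ under the constraint $\widetilde{\varrho}_+>\widetilde{\varrho}_M$.
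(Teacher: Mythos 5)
Your overall strategy --- evaluate the conditions of Theorem \ref{thm:SRsubs} at the degenerate point $\widetilde{U}_+=\widetilde{U}_M$, $\delta_2=0$, obtain strict inequalities there, and then recover a uniform radius $r$ by continuity --- is exactly the paper's strategy, and your preliminary observations ($v_{1\,2}^\star\to\widetilde{v}_{-\,2}$ and $\delta_1^\star\to 0$ as $\varrho_1\downarrow\widetilde{\varrho}_-$, positivity of $\delta_1^\star$ on $(\widetilde{\varrho}_-,\widetilde{\varrho}_M)$ via convexity of $p$) are correct. However, there is a genuine gap at precisely the step you flag as the main obstacle, and the resolution you propose for it does not work. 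Writing the slack in the left admissibility inequality \eqref{eq:SRcond3} as a function $A(\varrho_1,\delta_2,\widetilde{U}_+)$, one finds $A(\varrho_1,0,\widetilde{U}_M)\to 0$ as $\varrho_1\to\widetilde{\varrho}_-$: in this limit the left interface degenerates to a zero-strength discontinuity, so there is no dissipation margin there at all --- the inequality is \emph{saturated}, not ``strict with a definite margin'' as you assert. Whether $A$ becomes positive for $\varrho_1$ slightly above $\widetilde{\varrho}_-$ is decided by the sign of
\begin{equation*}
\lim_{\varrho_1\to\widetilde{\varrho}_-}\frac{\partial A}{\partial\varrho_1}
=\bigg(-\frac{\widetilde{\varrho}_M}{\widetilde{\varrho}_M -\widetilde{\varrho}_-}\,R + 2\,\widetilde{v}_{-\,2}\bigg)\bigg(-p'(\widetilde{\varrho}_-) + \frac{\widetilde{\varrho}_M}{\widetilde{\varrho}_-}\,\frac{p(\widetilde{\varrho}_M) - p(\widetilde{\varrho}_-)}{\widetilde{\varrho}_M - \widetilde{\varrho}_-}\bigg),
\qquad R:=\sqrt{\frac{(\widetilde{\varrho}_M - \widetilde{\varrho}_-)\big(p(\widetilde{\varrho}_M) - p(\widetilde{\varrho}_-)\big)}{\widetilde{\varrho}_M\,\widetilde{\varrho}_-}}.
\end{equation*}
The second factor is always positive, but the first is negative whenever $\widetilde{v}_{-\,2}<\frac{\widetilde{\varrho}_M}{2(\widetilde{\varrho}_M-\widetilde{\varrho}_-)}R$ (the conditions as parametrized are not Galilean-invariant, so the actual size of $\widetilde{v}_{-\,2}$ matters). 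In that regime $A<0$ for every $\varrho_1$ just above $\widetilde{\varrho}_-$, and your prescription ``choose $\varrho_1$ slightly larger than $\widetilde{\varrho}_-$'' fails.

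The paper repairs this with a case distinction absent from your write-up. If $\widetilde{v}_{-\,2}>\frac{\widetilde{\varrho}_M}{2(\widetilde{\varrho}_M-\widetilde{\varrho}_-)}R$, one takes $\varrho_1\approx\widetilde{\varrho}_-$ as you propose: $A>0$ there by the derivative computation, while the right-interface slack $B$ is strictly positive at the endpoint $\varrho_1=\widetilde{\varrho}_-$ by Lemma \ref{lemma:auschiokreml}, hence also nearby. If instead $\widetilde{v}_{-\,2}\le\frac{\widetilde{\varrho}_M}{2(\widetilde{\varrho}_M-\widetilde{\varrho}_-)}R$, the roles are exactly reversed: $A(\varrho_1=\widetilde{\varrho}_M)>0$ by Lemma \ref{lemma:auschiokreml}, while $B\to 0$ as $\varrho_1\to\widetilde{\varrho}_M$ with $\lim\partial_{\varrho_1}B<0$, so one must take $\varrho_1\approx\widetilde{\varrho}_M$ from below (still $\varrho_1<\widetilde{\varrho}_M$, so the final assertion of the lemma survives in both cases). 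Without this dichotomy --- or some substitute argument producing a single $\varrho_1\in(\widetilde{\varrho}_-,\widetilde{\varrho}_M)$ with $A>0$ and $B>0$ simultaneously --- the proof is incomplete. The remaining steps you outline (choosing $\delta_2>0$ small, then $r>0$ by joint continuity of $\delta_1^\diamond$, $A$, $B$ in $(\varrho_1,\delta_2,\widetilde{U}_+)$) match the paper.
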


During the workshop ``Ideal Fluids and Transport'' at IMPAN in Warsaw (February 13-15, 2017) the authors learned about a result achieved by E. Chiodaroli and O. Kreml which is similar to our lemma \ref{lemma:SRperturb}, see also \cite{chio17}.

\begin{proof}
	To prove this we are going to use theorem \ref{thm:SRsubs}. Hence it suffices to show that there exists a radius $r>0$ such that for every state  $\widetilde{U}_+\in B_r(\widetilde{U}_M)$ with $\widetilde{\varrho}_+>\widetilde{\varrho}_M$, we find $\varrho_1,\delta_2\in\mathbb{R}^+$ such that inequalities \eqref{eq:SRcond1} - \eqref{eq:SRcond4} are fulfilled. 
	
	In view of the functions $v_{1\,2}^\star$ and $\delta_1^\star$ (see \eqref{eq:SRdefnv12}, \eqref{eq:SRdefndelta1}), we define the following functions $\delta_1^\diamond,v_{1\,2}^\diamond:\mathbb{R}^+\times\mathcal{P}\rightarrow\mathbb{R}$ as
	\begin{equation*}
	\begin{split}
	&v_{1\,2}^\diamond(\varrho_1,\widetilde{U}_+) := 				\frac{1}{\varrho_1\,(\widetilde{\varrho}_- - \widetilde{\varrho}_+)}\,\Bigg(-\widetilde{\varrho}_-\,\widetilde{v}_{-\,2}\,(\widetilde{\varrho}_+ - \varrho_1) - \widetilde{\varrho}_+\,\widetilde{v}_{+\,2}\,(\varrho_1 - \widetilde{\varrho}_-) \\
	&\ \ + \sqrt{\Big[(\widetilde{\varrho}_- - \widetilde{\varrho}_+)\,\big(p(\widetilde{\varrho}_-) - p(\widetilde{\varrho}_+)\big) - \widetilde{\varrho}_+\,\widetilde{\varrho}_-\,(\widetilde{v}_{-\,2} - \widetilde{v}_{+\,2})^2\Big]\,(\varrho_1 - \widetilde{\varrho}_-)\,(\widetilde{\varrho}_+ - \varrho_1)}\Bigg)  
	\end{split}
	\end{equation*}
	and
	\begin{equation*}
	\begin{split}
	&\delta_1^\diamond(\varrho_1,\widetilde{U}_+) := -\frac{p(\varrho_1) - p(\widetilde{\varrho}_-)}{\varrho_1} + \frac{\widetilde{\varrho}_-\,(\varrho_1 - \widetilde{\varrho}_-)}{\varrho_1^2\,(\widetilde{\varrho}_- - \widetilde{\varrho}_+)^2}\,\Bigg(\widetilde{\varrho}_+\,(\widetilde{v}_{-\,2} - \widetilde{v}_{+\,2}) \\
	&\qquad+ \sqrt{\Big[(\widetilde{\varrho}_- - \widetilde{\varrho}_+)\,\big(p(\widetilde{\varrho}_-) - p(\widetilde{\varrho}_+)\big) - \widetilde{\varrho}_+\,\widetilde{\varrho}_-\,(\widetilde{v}_{-\,2} - \widetilde{v}_{+\,2})^2\Big]\,\frac{\widetilde{\varrho}_+ - \varrho_1}{\varrho_1 - \widetilde{\varrho}_-}}\Bigg)^2.  
	\end{split}
	\end{equation*}
	
	In addition we define functions $A,B:\mathbb{R}^+\times\mathbb{R}^+\times\mathcal{P}\rightarrow\mathbb{R}$ as
	\begin{align*}
	A(\varrho_1,\delta_2,\widetilde{U}_+) &:= \delta_1^\diamond(\varrho_1,\widetilde{U}_+)\,\varrho_1\,\big(v_{1\,2}^\diamond(\varrho_1,\widetilde{U}_+)+\widetilde{v}_{-\,2}\big) \\
	&\ \ \ -\big(\delta_1^\diamond(\varrho_1,\widetilde{U}_+)+\delta_2\big)\,\frac{\widetilde{\varrho}_-\,\varrho_1\,\big(v_{1\,2}^\diamond(\varrho_1,\widetilde{U}_+)-\widetilde{v}_{-\,2}\big)}{\widetilde{\varrho}_- -\varrho_1} \\
	&\ \ \  - \big(v_{1\,2}^\diamond(\varrho_1,\widetilde{U}_+)-\widetilde{v}_{-\,2}\big)\,\bigg(p(\widetilde{\varrho}_-)+p(\varrho_1)-2\,\widetilde{\varrho}_-\,\varrho_1\,\frac{\varepsilon(\widetilde{\varrho}_-)-\varepsilon(\varrho_1)}{\widetilde{\varrho}_- -\varrho_1}\bigg), \\
	B(\varrho_1,\delta_2,\widetilde{U}_+) &:= -\delta_1^\diamond(\varrho_1,\widetilde{U}_+)\,\varrho_1\,\big(\widetilde{v}_{+\,2}+v_{1\,2}^\diamond(\varrho_1,\widetilde{U}_+)\big) \\
	&\ \ \ +\big(\delta_1^\diamond(\varrho_1,\widetilde{U}_+)+\delta_2\big)\,\frac{\varrho_1\,\widetilde{\varrho}_+\,\big(\widetilde{v}_{+\,2}-v_{1\,2}^\diamond(\varrho_1,\widetilde{U}_+)\big)}{\varrho_1 -\widetilde{\varrho}_+} \\
	&\ \ \  - \big(\widetilde{v}_{+\,2}-v_{1\,2}^\diamond(\varrho_1,\widetilde{U}_+)\big)\,\bigg(p(\varrho_1)+p(\widetilde{\varrho}_+)-2\,\varrho_1\,\widetilde{\varrho}_+\,\frac{\varepsilon(\varrho_1)-\varepsilon(\widetilde{\varrho}_+)}{\varrho_1 -\widetilde{\varrho}_+}\bigg).
	\end{align*}
	
	Since $\widetilde{U}_-$ and $\widetilde{U}_M$ can be connected by a 1-shock we obtain according to proposition \ref{prop:standardsolution} that $\widetilde{\varrho}_-<\widetilde{\varrho}_M$ and 
	\begin{equation}
	\widetilde{v}_{-\,2} - \widetilde{v}_{M\,2} = \sqrt{\frac{(\widetilde{\varrho}_M - \widetilde{\varrho}_-)\,\big(p(\widetilde{\varrho}_M) - p(\widetilde{\varrho}_-)\big)}{\widetilde{\varrho}_M\,\widetilde{\varrho}_-}}.
	\label{eq:temp42}
	\end{equation} 
	
	Next we show that there exists $\varrho_1\in(\widetilde{\varrho}_-,\widetilde{\varrho}_M)$ such that 
	\begin{align}
	&\delta_1^\diamond(\varrho_1,\widetilde{U}_+=\widetilde{U}_M) >0, \label{eq:SRconddeltaABd}\\
	&A(\varrho_1,\delta_2=0,\widetilde{U}_+=\widetilde{U}_M) >0, \label{eq:SRconddeltaABA}\\
	&B(\varrho_1,\delta_2=0,\widetilde{U}_+=\widetilde{U}_M) >0. \label{eq:SRconddeltaABB}
	\end{align}
	
	First we prove that \eqref{eq:SRconddeltaABd} is true for all $\varrho_1\in(\widetilde{\varrho}_-,\widetilde{\varrho}_M)$. Using \eqref{eq:temp42} we obtain
	\begin{equation*}
	\delta_1^\diamond(\varrho_1,\widetilde{U}_+=\widetilde{U}_M) = -\frac{p(\varrho_1) - p(\widetilde{\varrho}_-)}{\varrho_1} + \frac{\widetilde{\varrho}_M}{\varrho_1^2}\ \frac{p(\widetilde{\varrho}_M) - p(\widetilde{\varrho}_-)}{\widetilde{\varrho}_M - \widetilde{\varrho}_-}\ (\varrho_1 - \widetilde{\varrho}_-).
	\end{equation*}
	Each $\varrho_1\in(\widetilde{\varrho}_-,\widetilde{\varrho}_M)$ can be written as a convex combination of $\widetilde{\varrho}_-$ and $\widetilde{\varrho}_M$. In other words there exists $\theta\in(0,1)$ such that 
	\begin{equation*}
	\varrho_1=\theta\,\widetilde{\varrho}_- + (1-\theta)\,\widetilde{\varrho}_M.
	\end{equation*}
	Since $p$ is a convex function of $\varrho$ we have 
	\begin{equation*}
	p(\varrho_1)=p\big(\theta\,\widetilde{\varrho}_- + (1-\theta)\,\widetilde{\varrho}_M\big) \leq \theta\,p(\widetilde{\varrho}_-) + (1-\theta)\,p(\widetilde{\varrho}_M)
	\end{equation*}
	and hence
	\begin{align*}
	\delta_1^\diamond(\varrho_1,\widetilde{U}_+=\widetilde{U}_M) &= \frac{1}{\varrho_1}\,\bigg( -p(\varrho_1) + p(\widetilde{\varrho}_-) + \frac{\widetilde{\varrho}_M}{\varrho_1}\ \frac{p(\widetilde{\varrho}_M) - p(\widetilde{\varrho}_-)}{\widetilde{\varrho}_M - \widetilde{\varrho}_-}\ (\varrho_1 - \widetilde{\varrho}_-)\bigg) \\
	&\geq \frac{1}{\varrho_1^2}\,\theta\,(1-\theta)\,\big(p(\widetilde{\varrho}_M) - p(\widetilde{\varrho}_-)\big)\,(\widetilde{\varrho}_M - \widetilde{\varrho}_-)\ >\ 0.
	\end{align*}
	Therefore \eqref{eq:SRconddeltaABd} is true for all $\varrho_1\in(\widetilde{\varrho}_-,\widetilde{\varrho}_M)$. 
	
	For convenience we define
	\begin{equation*}
	R := \sqrt{\frac{(\widetilde{\varrho}_M - \widetilde{\varrho}_-)\,\big(p(\widetilde{\varrho}_M) - p(\widetilde{\varrho}_-)\big)}{\widetilde{\varrho}_M\,\widetilde{\varrho}_-}}.
	\end{equation*}
	
	To show the existence of $\varrho_1\in(\widetilde{\varrho}_-,\widetilde{\varrho}_M)$ that satisfies \eqref{eq:SRconddeltaABA} and \eqref{eq:SRconddeltaABB} we consider two cases: Let first \begin{equation*}
	\widetilde{v}_{-\,2}>\frac{\widetilde{\varrho}_M}{2\,(\widetilde{\varrho}_M-\widetilde{\varrho}_-)}\,R .
	\end{equation*}
	An easy computation leads to
	\begin{equation*}
	\lim\limits_{\varrho_1\rightarrow\widetilde{\varrho}_-} A(\varrho_1,\delta_2=0,\widetilde{U}_+=\widetilde{U}_M)=0, 
	\end{equation*}
	and also
	\begin{align*} 
	\lim\limits_{\varrho_1\rightarrow\widetilde{\varrho}_-}&\bigg(\frac{\partial}{\partial\varrho_1}A(\varrho_1,\delta_2=0,\widetilde{U}_+=\widetilde{U}_M)\bigg)\\
	&=\bigg(-\frac{\widetilde{\varrho}_M}{\widetilde{\varrho}_M -\widetilde{\varrho}_-}\,R + 2\,\widetilde{v}_{-\,2}\bigg)\,\bigg(-p'(\widetilde{\varrho}_-) + \frac{\widetilde{\varrho}_M}{\widetilde{\varrho}_-}\ \ \frac{p(\widetilde{\varrho}_M) - p(\widetilde{\varrho}_-)}{\widetilde{\varrho}_M - \widetilde{\varrho}_-}\bigg).
	\end{align*}
	
	In the case under consideration it holds that 
	\begin{equation*}
	-\frac{\widetilde{\varrho}_M}{\widetilde{\varrho}_M -\widetilde{\varrho}_-}\,R + 2\,v_{-\,2} > -\frac{\widetilde{\varrho}_M}{\widetilde{\varrho}_M -\widetilde{\varrho}_-}\,R + \frac{\widetilde{\varrho}_M}{\widetilde{\varrho}_M-\widetilde{\varrho}_-}\,R = 0.
	\end{equation*}
	In addition to that the fact that $\widetilde{\varrho}_-<\widetilde{\varrho}_M$ and the convexity of $p$ lead to 
	\begin{equation*}
	-p'(\widetilde{\varrho}_-) + \frac{\widetilde{\varrho}_M}{\widetilde{\varrho}_-}\ \ \frac{p(\widetilde{\varrho}_M) - p(\widetilde{\varrho}_-)}{\widetilde{\varrho}_M - \widetilde{\varrho}_-} > -p'(\widetilde{\varrho}_-) + \frac{p(\widetilde{\varrho}_M) - p(\widetilde{\varrho}_-)}{\widetilde{\varrho}_M - \widetilde{\varrho}_-} \geq 0.
	\end{equation*}
	Hence
	\begin{equation*} 
	\lim\limits_{\varrho_1\rightarrow\widetilde{\varrho}_-}\bigg(\frac{\partial}{\partial\varrho_1}A(\varrho_1,\delta_2=0,\widetilde{U}_+=\widetilde{U}_M)\bigg) > 0.
	\end{equation*}
	
	By obvious continuity of the function $A$ there exists $\varrho_1\in(\widetilde{\varrho}_-,\widetilde{\varrho}_M)$ where $\varrho_1\approx\widetilde{\varrho}_-$ such that \eqref{eq:SRconddeltaABA} holds. 
	
	Another computation shows that 
	\begin{align*}
	&B(\varrho_1=\widetilde{\varrho}_-,\delta_2=0,\widetilde{U}_+=\widetilde{U}_M) \\
	&=R\,\bigg(p(\varrho_-)+p(\varrho_M) - 2\,\varrho_M\,\varrho_- \,\frac{\varepsilon(\varrho_M)-\varepsilon(\varrho_-)}{\varrho_M - \varrho_-}\bigg)>0,
	\end{align*} 
	according to lemma \ref{lemma:auschiokreml}. Hence by continuity of $B$ we can choose $\varrho_1\in(\widetilde{\varrho}_-,\widetilde{\varrho}_M)$ such that \eqref{eq:SRconddeltaABB} is fulfilled in addition to \eqref{eq:SRconddeltaABA}. 
	
	Suppose now the second case
	\begin{equation*}
	\widetilde{v}_{-\,2} \leq\frac{\widetilde{\varrho}_M}{2\,(\widetilde{\varrho}_M-\widetilde{\varrho}_-)}\,R .
	\end{equation*}
	Similar computations yield
	\begin{align*}
	&A(\varrho_1=\widetilde{\varrho}_M,\delta_2=0,\widetilde{U}_+=\widetilde{U}_M) \\
	&=R\,\bigg(p(\varrho_-)+p(\varrho_M) - 2\,\varrho_M\,\varrho_- \,\frac{\varepsilon(\varrho_M)-\varepsilon(\varrho_-)}{\varrho_M - \varrho_-}\bigg)>0,
	\end{align*}
	and furthermore
	\begin{equation*} 
	\lim\limits_{\varrho_1\rightarrow\widetilde{\varrho}_M} B(\varrho_1,\delta_2=0,\widetilde{U}_+=\widetilde{U}_M)=0, 
	\end{equation*}
	together with
	\begin{align*}
	\lim\limits_{\varrho_1\rightarrow\widetilde{\varrho}_M}&\bigg(\frac{\partial}{\partial\varrho_1}B(\varrho_1,\delta_2=0,\widetilde{U}_+=\widetilde{U}_M)\bigg)\\
	&=\bigg(-\frac{2\,\widetilde{\varrho}_M-\widetilde{\varrho}_-}{\widetilde{\varrho}_M -\widetilde{\varrho}_-}\,R + 2\,\widetilde{v}_{-\,2}\bigg)\,\bigg(p'(\widetilde{\varrho}_M) - \frac{\widetilde{\varrho}_-}{\widetilde{\varrho}_M}\ \ \frac{p(\widetilde{\varrho}_M) - p(\widetilde{\varrho}_-)}{\widetilde{\varrho}_M - \widetilde{\varrho}_-}\bigg).
	\end{align*}
	
	In the considered case we have
	\begin{equation*}
	-\frac{2\,\widetilde{\varrho}_M-\widetilde{\varrho}_-}{\widetilde{\varrho}_M -\widetilde{\varrho}_-}\,R + 2\,\widetilde{v}_{-\,2} \leq -\frac{2\,\widetilde{\varrho}_M-\widetilde{\varrho}_-}{\widetilde{\varrho}_M -\widetilde{\varrho}_-}\,R + \frac{\widetilde{\varrho}_M}{\widetilde{\varrho}_M-\widetilde{\varrho}_-}\,R = -R < 0.
	\end{equation*}
	Additionally the convexity of $p$ and $\widetilde{\varrho}_-<\widetilde{\varrho}_M$ lead to
	\begin{equation*}
	p'(\widetilde{\varrho}_M) - \frac{\widetilde{\varrho}_-}{\widetilde{\varrho}_M}\ \ \frac{p(\widetilde{\varrho}_M) - p(\widetilde{\varrho}_-)}{\widetilde{\varrho}_M - \widetilde{\varrho}_-} > p'(\widetilde{\varrho}_M) - \frac{p(\widetilde{\varrho}_M) - p(\widetilde{\varrho}_-)}{\widetilde{\varrho}_M - \widetilde{\varrho}_-} \geq 0.
	\end{equation*}
	Hence
	\begin{equation*} 
	\lim\limits_{\varrho_1\rightarrow\widetilde{\varrho}_M}\bigg(\frac{\partial}{\partial\varrho_1}B(\varrho_1,\delta_2=0,\widetilde{U}_+=\widetilde{U}_M)\bigg) < 0
	\end{equation*}
	and therefore by continuity of $A$ and $B$ there exists $\varrho_1\in(\widetilde{\varrho}_-,\widetilde{\varrho}_M)$ such that \eqref{eq:SRconddeltaABA} and \eqref{eq:SRconddeltaABB} hold, where $\varrho_1\approx\widetilde{\varrho}_M$. 
	
	By continuity we can find $\delta_2>0$ in addition to $\varrho_1$ found above, such that 
	\begin{align*}
	&\delta_1^\diamond(\varrho_1,\widetilde{U}_+=\widetilde{U}_M) >0, \\
	&A(\varrho_1,\delta_2,\widetilde{U}_+=\widetilde{U}_M) >0, \\
	&B(\varrho_1,\delta_2,\widetilde{U}_+=\widetilde{U}_M) >0. 
	\end{align*}
	
	Again by continuity there exists a radius $r>0$ such that 
	\begin{align}
	&\delta_1^\diamond(\varrho_1,\widetilde{U}_+)>0, \label{eq:dg0} \\
	&A(\varrho_1,\delta_2,\widetilde{U}_+) >0, \label{eq:Ag0}\\
	&B(\varrho_1,\delta_2,\widetilde{U}_+) >0 \label{eq:Bg0}
	\end{align}
	hold for all $\widetilde{U}_+\in B_r(\widetilde{U}_M)$. In other words for all $\widetilde{U}_+\in B_r(\widetilde{U}_M)$ we can find $\varrho_1,\delta_2\in\mathbb{R}^+$ such that $\widetilde{\varrho}_-<\varrho_1<\widetilde{\varrho}_M$ and \eqref{eq:dg0} - \eqref{eq:Bg0} are true. By assumption we have $\widetilde{\varrho}_M<\widetilde{\varrho}_+$ and hence \eqref{eq:SRcond1} is true. Additionally \eqref{eq:SRcond2} holds because of \eqref{eq:dg0} and finally \eqref{eq:Ag0}, resp. \eqref{eq:Bg0} imply \eqref{eq:SRcond3}, resp. \eqref{eq:SRcond4}. \qed
\end{proof} 

Next we prove theorem \ref{thm:SRexistence}.
\begin{proof}
	Let $U_M$ be the intermediate state of the standard solution. In other words $U_M$ lies on the 1-shock curve of the state $U_-$. So we can apply lemma \ref{lemma:SRperturb} to obtain a radius $r>0$. We fix a state $U_2\in\mathcal{P}$ such that
	\begin{itemize}
		\item $\varrho_M<\varrho_2<\varrho_+$,
		\item $$v_{2\,2}=v_{M\,2} + \int_{\varrho_M}^{\varrho_2} \frac{\sqrt{p'(r)}}{r}\,\dd r$$ and 
		\item $U_2 \in B_r(U_M)$.
	\end{itemize}
	
	Note that such a state $U_2$ exists. 
	
	Then consider the two new problems 
	\begin{align*}
	\widetilde{U}_- &= U_- \\
	\widetilde{U}_+ &= U_2,
	\end{align*}
	called problem $\sim$, and 
	\begin{align*}
	\widehat{U}_- &= U_2 \\
	\widehat{U}_+ &= U_+,
	\end{align*}
	which we call problem $ \wedge$. 
	
	Let us first consider problem $\sim$. It is easy to check that the standard solution of problem $\sim$ consists of a 1-shock and a 3-rarefaction using propostion \ref{prop:standardsolution}: We have $\varrho_-<\varrho_M$ and $\varrho_M<\varrho_2$ and hence $\varrho_-<\varrho_2$. In addition to that it holds that
	\begin{align*}
	v_{2\,2} - v_{-\,2} &= v_{M\,2} - v_{-\,2} + \int_{\varrho_M}^{\varrho_2} \frac{\sqrt{p'(r)}}{r}\,\dd r \\
	&= -\sqrt{\frac{\big(\varrho_M - \varrho_-\big)\,\big(p(\varrho_M) - p(\varrho_-)\big)}{\varrho_M\,\varrho_-}} + \int_{\varrho_M}^{\varrho_2} \frac{\sqrt{p'(r)}}{r}\,\dd r \\
	&< \int_{\varrho_M}^{\varrho_2} \frac{\sqrt{p'(r)}}{r}\,\dd r\quad<\quad \int_{\varrho_-}^{\varrho_2} \frac{\sqrt{p'(r)}}{r}\,\dd r
	\end{align*}
	and 
	\begin{align*}
	v_{2\,2} &- v_{-\,2} = -\sqrt{\frac{\big(\varrho_M - \varrho_-\big)\,\big(p(\varrho_M) - p(\varrho_-)\big)}{\varrho_M\,\varrho_-}} + \int_{\varrho_M}^{\varrho_2} \frac{\sqrt{p'(r)}}{r}\,\dd r \\
	&\quad> -\sqrt{\frac{\big(\varrho_M - \varrho_-\big)\,\big(p(\varrho_M) - p(\varrho_-)\big)}{\varrho_M\,\varrho_-}} \ >\  -\sqrt{\frac{\big(\varrho_2 - \varrho_-\big)\,\big(p(\varrho_2) - p(\varrho_-)\big)}{\varrho_2\,\varrho_-}},
	\end{align*}
	where the last inequality comes from lemma \ref{lemma:rootroot}. 
	
	Hence we showed that the standard solution to problem $\sim$ consists of a 1-shock and a 3-rarefaction wave. 
	
	Because $U_2\in B_r(U_M)$ and $\varrho_2>\varrho_M$, according to lemma \ref{lemma:SRperturb} there exists an admissible fan subsolution to problem $\sim$ and hence infinitely many admissible weak solutions. In addition to that the same lemma yields $\varrho_1<\varrho_M$. 
	
	Now consider problem $\wedge$. We are going to prove that the standard solution to problem $\wedge$ consists only of a 3-rarefaction using proposition \ref{prop:standardsolution}. By definition of $U_2$ we have $\varrho_2<\varrho_+$ and additionally
	\begin{equation*}
	v_{+\,2} - v_{2\,2} = v_{+\,2} - v_{M\,2} - \int_{\varrho_M}^{\varrho_2} \frac{\sqrt{p'(r)}}{r}\,\dd r = \int_{\varrho_2}^{\varrho_+} \frac{\sqrt{p'(r)}}{r}\,\dd r.
	\end{equation*}	
	This shows that the standard solution of problem $\wedge$ consists of a just a 3-rarefaction wave. 
	
	To conclude we put together the wild solutions to problem $\sim$ and the standard solution to problem $\wedge$. To do this it remains to show that $\mu_1<\mu_2$ where $\mu_1$ is the speed of the right interface of the wild solutions of problem $\sim$ and $\mu_2=\lambda_3(U_2)$ is the left border of the rarefaction wave of the standard solution to problem $\wedge$. Here $\lambda_3(U)=v_2 + \sqrt{p'(\varrho)}$ denotes the 3rd eigenvalue of the Euler system, see \cite[equation (2.3)]{chio14}. 
	
	Since we have an admissible fan subsolution, we can apply proposition \ref{prop:2alggl}. Hence we get from \eqref{eq:2rhr2}
	\begin{equation} 
	\delta_1=\frac{\mu_1}{\varrho_1}\,(\varrho_1\,v_{1\,2} - \varrho_2\,v_{2\,2}) + \frac{\varrho_2}{\varrho_1}\,v_{2\,2}^2 - \frac{p(\varrho_1) - p(\varrho_2)}{\varrho_1} - v_{1\,2}^2
	\label{eq:temp43}
	\end{equation}
	and from \eqref{eq:2rhr1}
	\begin{equation*}
	v_{1\,2}=\frac{1}{\varrho_1}\big(\mu_1\,(\varrho_1-\varrho_2)+\varrho_2\,v_{2\,2}\big).
	\end{equation*}
	We use the latter to eliminate $v_{1\,2}$ in \eqref{eq:temp43} and obtain after some calculation
	\begin{equation*}
	\delta_1=\frac{\varrho_1 - \varrho_2}{\varrho_1^2}\,\varrho_2\,(\mu_1 - v_{2\,2})^2 - \frac{p(\varrho_1) - p(\varrho_2)}{\varrho_1}.
	\end{equation*}
	Since $\delta_1>0$, see \eqref{eq:2sc1}, it follows that 
	\begin{equation*}
	\frac{\varrho_1 - \varrho_2}{\varrho_1^2}\,\varrho_2\,(\mu_1 - v_{2\,2})^2 - \frac{p(\varrho_1) - p(\varrho_2)}{\varrho_1} > 0.
	\end{equation*}
	Because $\varrho_1<\varrho_M$ and $\varrho_M<\varrho_2$, we have $\varrho_1 - \varrho_2 < 0$. Therefore the inequality above is equivalent to 
	\begin{equation*}
	(\mu_1 - v_{2\,2})^2 < \frac{\varrho_1}{\varrho_2}\ \ \frac{p(\varrho_1) - p(\varrho_2)}{\varrho_1 - \varrho_2}.
	\end{equation*}
	
	Hence
	\begin{equation*}
	\mu_1 < v_{2\,2} + \sqrt{\frac{\varrho_1}{\varrho_2}\ \ \frac{p(\varrho_1) - p(\varrho_2)}{\varrho_1 - \varrho_2}} < v_{2\,2} + \sqrt{\frac{p(\varrho_1) - p(\varrho_2)}{\varrho_1 - \varrho_2}} \leq v_{2\,2} + \sqrt{p'(\rho_2)}
	\end{equation*}
	where the last inequality follows from the convexity of $p$. Since
	\begin{equation*}
	\mu_2 = \lambda_3(U_2) = v_{2\,2} + \sqrt{p'(\rho_2)}
	\end{equation*}
	we found the desired inequality $\mu_1 < \mu_2$. \qed
\end{proof}

The proof of theorem \ref{thm:SRexistence} yields admissible weak solutions of the form illustrated in figure \ref{fig:SR}.

\begin{figure}[hbt] 
	\centering
	\includegraphics[width=0.85\textwidth]{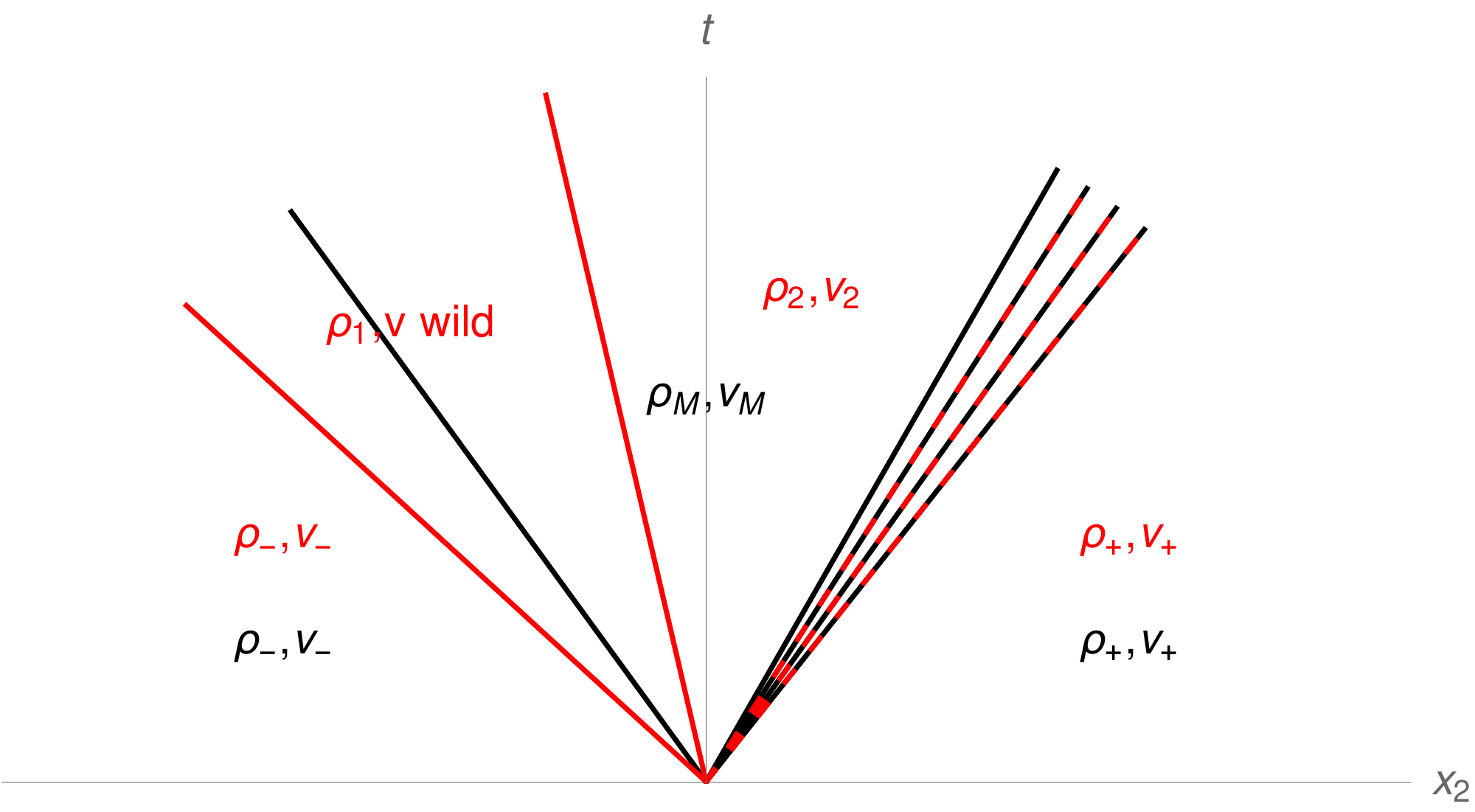}
	\caption{Structure of the standard solution (black) and of the admissible weak solutions produced in the proof of theorem \ref{thm:SRexistence} (red) } 
	\label{fig:SR}
\end{figure}


\section{The standard solution consists of just one shock}
\label{section:S}

What remains is the proof of existence of infinitely many admissible weak solutions in the case where the standard solution consists of just one shock.

\begin{remark} 
	As in the case of one shock and one rarefaction, it is enough to consider the case where the standard solution consists of a 1-shock because of the rotational invariance of the Euler system. If we have to deal with a 3-shock, we just rotate the coordinate system 180 degrees to obtain a new initial data
	\begin{equation}
	\begin{split}
	U_{-\,\text{new}} &= (\varrho_+,-v_+) \\
	U_{+\,\text{new}} &= (\varrho_-,-v_-).
	\end{split}
	\label{eq:rotinitS}
	\end{equation}
	Again, note that the sign of the velocities changes during this transformation. Proposition \ref{prop:standardsolution} yields then that the standard solution to the problem with rotated initial data \eqref{eq:rotinitS} consists of a 1-shock.
\end{remark} 

Let the initial values $\varrho_\pm\in\mathbb{R}^+$ and $v_\pm\in\mathbb{R}^2$ be such that the standard solution consists of a 1-shock. By proposition \ref{prop:standardsolution} this means, that 
\begin{equation}
\begin{split}
\varrho_-&<\varrho_+\quad\text{ and}\\
v_{+\,2} - v_{-\,2} &= -\sqrt{\frac{(\varrho_- - \varrho_+)\,\big(p(\varrho_-)-p(\varrho_+)\big)}{\varrho_-\,\varrho_+}}.
\end{split}
\label{eq:1S}
\end{equation}

\begin{theorem}
	Assume that \eqref{eq:1S} holds. Then there exist infinitely many admissible weak solutions to \eqref{eq:euler}, \eqref{eq:ourinit}.
	\label{thm:Sexistence}
\end{theorem}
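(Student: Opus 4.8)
The plan is to reduce this last case to the situation already handled in Theorem \ref{thm:SRexistence} by attaching an auxiliary state on the far side of the shock and gluing a genuine $3$-shock onto the wild solutions. The key starting observation is that, by \eqref{eq:1S}, the given state $U_+$ lies \emph{exactly} on the $1$-shock curve through $U_-$. Hence I would apply Lemma \ref{lemma:SRperturb} with $\widetilde U_-:=U_-$ and $\widetilde U_M:=U_+$: the hypotheses are satisfied precisely because $\widetilde U_M$ is connected to $\widetilde U_-$ by a $1$-shock, and this produces a radius $r>0$.

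Next I would introduce an auxiliary state $U_2$ sitting slightly beyond $U_+$ on the forward $3$-shock curve emanating from $U_+$, namely $v_{2\,1}=v_{+\,1}$, $\varrho_2>\varrho_+$ and
\[
v_{2\,2}=v_{+\,2}+\sqrt{\frac{(\varrho_2-\varrho_+)\,\big(p(\varrho_2)-p(\varrho_+)\big)}{\varrho_2\,\varrho_+}} ,
\]
with $\varrho_2-\varrho_+$ so small that $U_2\in B_r(U_+)$. Splitting the problem into problem $\sim$ with data $(\widetilde U_-,\widetilde U_+)=(U_-,U_2)$ and problem $\wedge$ with data $(\widehat U_-,\widehat U_+)=(U_2,U_+)$, I would first verify that the standard solution of problem $\sim$ consists of a $1$-shock and a $3$-rarefaction, i.e. that \eqref{eq:1S3R} holds for $(U_-,U_2)$. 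The upper bound holds for $\varrho_2$ close to $\varrho_+$ because the shock increment $v_{+\,2}-v_{-\,2}$ is negative while $\int_{\varrho_-}^{\varrho_2}\frac{\sqrt{p'(r)}}{r}\,\dd r>0$; the lower bound reduces, via Lemma \ref{lemma:rootroot}, to the fact that the Hugoniot speed $\sqrt{(\varrho_2-\varrho_-)(p(\varrho_2)-p(\varrho_-))/(\varrho_2\varrho_-)}$ already exceeds $\sqrt{(\varrho_+-\varrho_-)(p(\varrho_+)-p(\varrho_-))/(\varrho_+\varrho_-)}$. Then Lemma \ref{lemma:SRperturb} yields an admissible fan subsolution for problem $\sim$ with $\varrho_1<\varrho_+$, hence by Theorem \ref{thm:condition} infinitely many admissible weak solutions equal to $U_-$ for $x_2<\mu_0 t$ and to $U_2$ for $x_2>\mu_1 t$. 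For problem $\wedge$ the choice of $U_2$ together with case $6$ of Proposition \ref{prop:standardsolution} shows that the standard solution is a single admissible $3$-shock of speed $\sigma=v_{2\,2}+\sqrt{\frac{\varrho_+}{\varrho_2}\,\frac{p(\varrho_2)-p(\varrho_+)}{\varrho_2-\varrho_+}}$.

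The heart of the argument, and the step I expect to be the main obstacle, is the gluing inequality $\mu_1<\sigma$, where $\mu_1$ is the speed of the right interface of the fan subsolution. A genuinely new estimate is required here: in Theorem \ref{thm:SRexistence} it sufficed to compare $\mu_1$ with the rarefaction edge $\lambda_3(U_2)$, whereas now one must beat the strictly smaller shock speed $\sigma<\lambda_3(U_2)$. I would proceed exactly as in the proof of Theorem \ref{thm:SRexistence}: from \eqref{eq:2rhr1} and \eqref{eq:2rhr2} applied to problem $\sim$, together with the subsolution condition $\delta_1>0$, one obtains
\[
(\mu_1-v_{2\,2})^2<\frac{\varrho_1}{\varrho_2}\,\frac{p(\varrho_2)-p(\varrho_1)}{\varrho_2-\varrho_1},
\]
so that $\mu_1<v_{2\,2}+\sqrt{\frac{\varrho_1}{\varrho_2}\,\frac{p(\varrho_2)-p(\varrho_1)}{\varrho_2-\varrho_1}}$. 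The decisive point is then that the map $\varrho\mapsto\varrho\,\frac{p(\varrho_2)-p(\varrho)}{\varrho_2-\varrho}$ is increasing on $(0,\varrho_2)$, since $p$ is convex (so the secant slope to $\varrho_2$ increases) and the prefactor $\varrho$ increases. Because $\varrho_1<\varrho_+$, this gives $\frac{\varrho_1}{\varrho_2}\frac{p(\varrho_2)-p(\varrho_1)}{\varrho_2-\varrho_1}<\frac{\varrho_+}{\varrho_2}\frac{p(\varrho_2)-p(\varrho_+)}{\varrho_2-\varrho_+}$, whence $\mu_1<\sigma$.

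Finally, since $\mu_1<\sigma$, there is an open cone $\mu_1 t<x_2<\sigma t$ on which both constructions take the constant value $U_2$. I would therefore concatenate, for each wild solution of problem $\sim$, the wild data on $x_2<\mu_1 t$ with the constant state $U_2$ on $\mu_1 t<x_2<\sigma t$ and with $U_+$ on $x_2>\sigma t$. As the wild solutions are admissible up to $x_2=\mu_1 t$ and the $3$-shock is an admissible weak solution of \eqref{eq:euler}, \eqref{eq:ourinit} on $x_2>\mu_1 t$ agreeing with $U_2$ across the seam, the concatenation is again an admissible weak solution of the original Cauchy problem, and varying the wild solution produces infinitely many of them. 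The only remaining technical care is to confirm that gluing along $x_2=\mu_1 t$ preserves both the weak formulation and the entropy inequality, which is routine because the two pieces share the constant state $U_2$ there.
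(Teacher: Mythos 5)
Your proposal is correct and follows essentially the same route as the paper: the same choice of auxiliary state $U_2$ on the forward $3$-shock curve through $U_+$ with $\varrho_2-\varrho_+$ small, the same application of Lemma \ref{lemma:SRperturb} with $\widetilde U_M=U_+$, the same verification via Lemma \ref{lemma:rootroot} that problem $\sim$ falls under \eqref{eq:1S3R}, and the same gluing estimate $(\mu_1-v_{2\,2})^2<\frac{\varrho_1}{\varrho_2}\,\frac{p(\varrho_2)-p(\varrho_1)}{\varrho_2-\varrho_1}$ sharpened by monotonicity of $\varrho\mapsto\varrho\,\frac{p(\varrho_2)-p(\varrho)}{\varrho_2-\varrho}$ to beat the shock speed $\sigma=\mu_2$. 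You also correctly identify the one point where this case genuinely differs from Theorem \ref{thm:SRexistence}, namely that the factor $\varrho_1/\varrho_2$ must be retained to compare against the shock speed rather than $\lambda_3(U_2)$.
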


\begin{proof}
	We apply lemma \ref{lemma:SRperturb} to $\widetilde{U}_-=U_-$ and $\widetilde{U}_M=U_+$ to obtain a radius $r>0$. We fix a state $U_2\in\mathcal{P}$ such that
	\begin{itemize}
		\item $\varrho_+<\varrho_2$,
		\item $v_{2\,2} = v_{+\,2} + \sqrt{\frac{(\varrho_2 - \varrho_+)\,\big(p(\varrho_2)-p(\varrho_+)\big)}{\varrho_2\,\varrho_+}}$,
		\item $U_2 \in B_r(U_+)$ and
		\item 
		\begin{equation}
		\sqrt{\frac{\big(\varrho_2 - \varrho_+\big)\,\big(p(\varrho_2)-p(\varrho_+)\big)}{\varrho_2\,\varrho_+}} < \int_{\varrho_-}^{\varrho_+} \frac{\sqrt{p'(r)}}{r}\,\dd r.
		\label{eq:temp51}
		\end{equation}
	\end{itemize}
	
	Note that such a state $U_2$ exists, because if we set $\varrho_2:=\varrho_+ + \epsilon$ and $\epsilon>0$ sufficiently small, then all the properties are fulfilled. 
	
	Then consider the two new problems 
	\begin{align*}
	\widetilde{U}_- &= U_- \\
	\widetilde{U}_+ &= U_2,
	\end{align*}
	called problem $\sim$, and 
	\begin{align*}
	\widehat{U}_- &= U_2 \\
	\widehat{U}_+ &= U_+,
	\end{align*}
	what we call problem $ \wedge$. 
	
	Let us first consider problem $\sim$. It is easy to check that the standard solution of problem $\sim$ consists of a 1-shock and a 3-rarefaction using propostion \ref{prop:standardsolution}: We have $\varrho_-<\varrho_+$ and $\varrho_+<\varrho_2$ and hence $\varrho_-<\varrho_2$. In addition to that it holds that
	\begin{align*}
	v_{2\,2} - v_{-\,2} &= v_{+\,2} - v_{-\,2} + \sqrt{\frac{\big(\varrho_2 - \varrho_+\big)\,\big(p(\varrho_2)-p(\varrho_+)\big)}{\varrho_2\,\varrho_+}} \\
	&= -\sqrt{\frac{\big(\varrho_+ - \varrho_-\big)\,\big(p(\varrho_+)-p(\varrho_-)\big)}{\varrho_-\,\varrho_+}} + \sqrt{\frac{\big(\varrho_2 - \varrho_+\big)\,\big(p(\varrho_2)-p(\varrho_+)\big)}{\varrho_2\,\varrho_+}} \\
	&< \sqrt{\frac{\big(\varrho_2 - \varrho_+\big)\,\big(p(\varrho_2)-p(\varrho_+)\big)}{\varrho_2\,\varrho_+}} \\
	&<\int_{\varrho_-}^{\varrho_+} \frac{\sqrt{p'(r)}}{r}\,\dd r \quad<\quad \int_{\varrho_-}^{\varrho_2} \frac{\sqrt{p'(r)}}{r}\,\dd r,
	\end{align*}
	where we used \eqref{eq:temp51}, and 
	\begin{align*}
	v_{2\,2} &- v_{-\,2} = -\sqrt{\frac{\big(\varrho_+ - \varrho_-\big)\,\big(p(\varrho_+)-p(\varrho_-)\big)}{\varrho_-\,\varrho_+}} + \sqrt{\frac{\big(\varrho_2 - \varrho_+\big)\,\big(p(\varrho_2)-p(\varrho_+)\big)}{\varrho_2\,\varrho_+}} \\
	&\quad> -\sqrt{\frac{\big(\varrho_+ - \varrho_-\big)\,\big(p(\varrho_+)-p(\varrho_-)\big)}{\varrho_-\,\varrho_+}} \ >\  -\sqrt{\frac{\big(\varrho_2 - \varrho_-\big)\,\big(p(\varrho_2) - p(\varrho_-)\big)}{\varrho_2\,\varrho_-}},
	\end{align*}
	where lemma \ref{lemma:rootroot} was applied. 
	
	Hence we showed that the standard solution to problem $\sim$ consists of a 1-shock and a 3-rarefaction wave. 
	
	Because $U_2\in B_r(U_+)$ and $\varrho_2>\varrho_+$, according to lemma \ref{lemma:SRperturb} there exists an admissible fan subsolution to problem $\sim$ and hence infinitely many admissible weak solutions. Additionally the same lemma yields $\varrho_1<\varrho_+$. 
	
	Now consider problem $\wedge$. We are going to prove that the standard solution to problem $\wedge$ consists only of a 3-shock using proposition \ref{prop:standardsolution}. By definition of $U_2$ we have $\varrho_2>\varrho_+$ and additionally
	\begin{equation*}
	v_{+\,2} - v_{2\,2} = -\sqrt{\frac{\big(\varrho_2 - \varrho_+\big)\,\big(p(\varrho_2)-p(\varrho_+)\big)}{\varrho_2\,\varrho_+}}.
	\end{equation*}	
	This shows that the standard solution of problem $\wedge$ consists of a just a 3-shock. 
	
	To conclude we put together the wild solutions to problem $\sim$ and the standard solution to problem $\wedge$. To do this it remains to show that $\mu_1<\mu_2$ where $\mu_1$ is the speed of the right interface of the wild solutions of problem $\sim$ and $\mu_2=\frac{\varrho_2\,v_{2\,2} - \varrho_+\,v_{+\,2}}{\varrho_2 - \varrho_+}$ is the speed of the shock of the standard solution to problem $\wedge$. 
	
	Since we have an admissible fan subsolution, we can apply proposition \ref{prop:2alggl}. As in the proof of theorem \ref{thm:SRexistence} we get from \eqref{eq:2rhr2}
	\begin{equation} 
	\delta_1=\frac{\mu_1}{\varrho_1}\,(\varrho_1\,v_{1\,2} - \varrho_2\,v_{2\,2}) + \frac{\varrho_2}{\varrho_1}\,v_{2\,2}^2 - \frac{p(\varrho_1) - p(\varrho_2)}{\varrho_1} - v_{1\,2}^2
	\label{eq:temp52}
	\end{equation}
	and from \eqref{eq:2rhr1}
	\begin{equation*}
	v_{1\,2}=\frac{1}{\varrho_1}\big(\mu_1\,(\varrho_1-\varrho_2)+\varrho_2\,v_{2\,2}\big).
	\end{equation*}
	We use the latter to eliminate $v_{1\,2}$ in \eqref{eq:temp52} and obtain after some calculation
	\begin{equation*}
	\delta_1=\frac{\varrho_1 - \varrho_2}{\varrho_1^2}\,\varrho_2\,(\mu_1 - v_{2\,2})^2 - \frac{p(\varrho_1) - p(\varrho_2)}{\varrho_1}.
	\end{equation*}
	Since $\delta_1>0$, see \eqref{eq:2sc1}, it follows that 
	\begin{equation*}
	\frac{\varrho_1 - \varrho_2}{\varrho_1^2}\,\varrho_2\,(\mu_1 - v_{2\,2})^2 - \frac{p(\varrho_1) - p(\varrho_2)}{\varrho_1} > 0.
	\end{equation*}
	Because $\varrho_1 < \varrho_+ < \varrho_2$, we have $\varrho_1 - \varrho_2 < 0$, and hence the inequality above is equivalent to 
	\begin{equation*}
	(\mu_1 - v_{2\,2})^2 < \frac{\varrho_1}{\varrho_2}\ \ \frac{p(\varrho_1) - p(\varrho_2)}{\varrho_1 - \varrho_2}.
	\end{equation*}
	
	In addition to that we get because of $\varrho_1<\varrho_+<\varrho_2$ 
	\begin{equation*}
	\frac{\varrho_1}{\varrho_2} < \frac{\varrho_+}{\varrho_2},
	\end{equation*}
	and using the convexity of $p$
	\begin{equation*}
	\frac{p(\varrho_2) - p(\varrho_1)}{\varrho_2 - \varrho_1} \leq \frac{p(\varrho_2) - p(\varrho_+)}{\varrho_2 - \varrho_+}.
	\end{equation*}
	
	Therefore
	\begin{align*}
	\mu_1 &< v_{2\,2} + \sqrt{\frac{\varrho_1}{\varrho_2}\ \ \frac{p(\varrho_1) - p(\varrho_2)}{\varrho_1 - \varrho_2}} \\
	&\leq v_{2\,2} + \sqrt{\frac{\varrho_+}{\varrho_2}\ \ \frac{p(\varrho_2) - p(\varrho_+)}{\varrho_2 - \varrho_+}} \\
	&=\frac{v_{2\,2}\,(\varrho_2 - \varrho_+)}{\varrho_2 - \varrho_+} + \frac{\varrho_+}{\varrho_2 - \varrho_+}\sqrt{\frac{(\varrho_2 - \varrho_+)\,\big(p(\varrho_2) - p(\varrho_+)\big)}{\varrho_2\,\varrho_+}} \\
	&=\frac{v_{2\,2}\,(\varrho_2 - \varrho_+)}{\varrho_2 - \varrho_+} + \frac{\varrho_+}{\varrho_2 - \varrho_+}\,(v_{2\,2} - v_{+\,2}) \\
	&= \frac{\varrho_2\,v_{2\,2} - \varrho_+\,v_{+\,2}}{\varrho_2 - \varrho_+}\quad =\quad \mu_2,
	\end{align*}
	which is the desired inequality $\mu_1 < \mu_2$. \qed
\end{proof}

The proof of theorem \ref{thm:Sexistence} yields admissible weak solutions of the form illustrated in figure \ref{fig:S}.

\begin{figure}[hbt]
	\centering
	\includegraphics[width=0.85\textwidth]{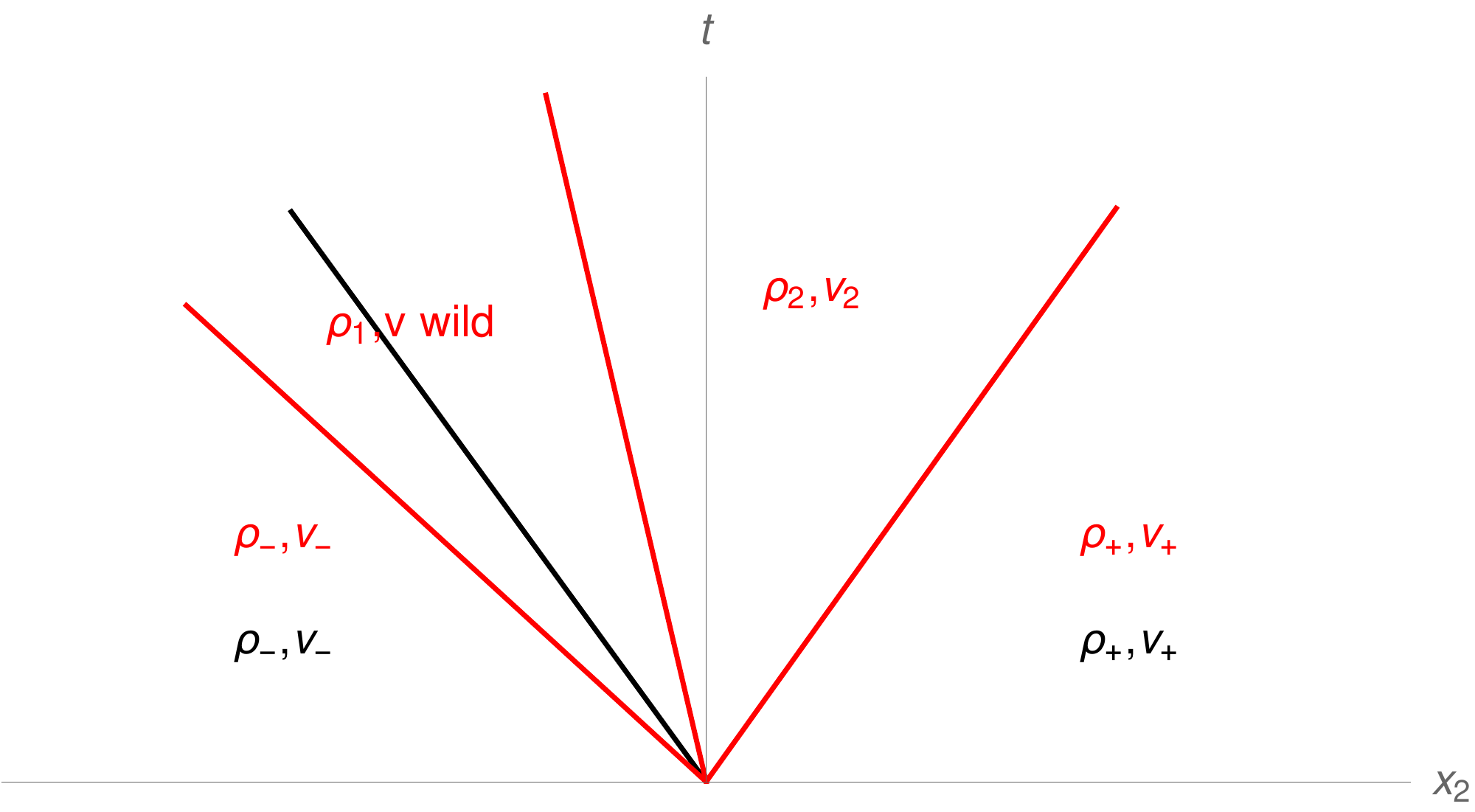}
	\caption{Structure of the standard solution (black) and of the admissible weak solutions produced in the proof of theorem \ref{thm:Sexistence} (red) } 
	\label{fig:S}
\end{figure} 

\noindent
\textbf{Acknowledgement: } The first author was partially supported by the DAAD program ``A New Passage to India'' to visit the TIFR CAM in Bangalore.

\medskip

\noindent
\textbf{Conflict of Interest:} The authors declare that they have no conflict of interest.

\bibliographystyle{spmpsci}      

\begin{thebibliography}{1}
	\providecommand{\url}[1]{{#1}}
	\providecommand{\urlprefix}{URL }
	\expandafter\ifx\csname urlstyle\endcsname\relax
	\providecommand{\doi}[1]{DOI~\discretionary{}{}{}#1}\else
	\providecommand{\doi}{DOI~\discretionary{}{}{}\begingroup
		\urlstyle{rm}\Url}\fi
	
	\bibitem{brezina}
	B\v{r}ezina, J., Chiodaroli, E., Kreml, O.: On contact discontinuities in multi-dimensional isentropic Euler equations.
	\newblock Preprint (2017), arXiv: 1707.00473
	
	\bibitem{chen}
	Chen, G.Q., Chen, J.: Stability of rarefaction waves and vacuum states for the
	multidimensional euler equations.
	\newblock J. Hyperbolic Differ. Equ. \textbf{4}(1), 105--122 (2007)
	
	\bibitem{chio15}
	Chiodaroli, E., DeLellis, C., Kreml, O.: Global ill-posedness of the isentropic
	system of gas dynamics.
	\newblock Comm. Pure Appl. Math. \textbf{68}(7), 1157--1190 (2015)
	
	\bibitem{chio14}
	Chiodaroli, E., Kreml, O.: On the energy dissipation rate of solutions to the
	compressible isentropic euler system.
	\newblock Arch. Ration. Mech. Anal. \textbf{214}(3), 1019--1049 (2014)

	\bibitem{chio17}
	Chiodaroli, E., Kreml, O.: Non-uniqueness of admissible weak solutions to the Riemann problem for the isentropic Euler equations.
	\newblock Preprint (2017), arXiv: 1704.01747
	
	\bibitem{dafermos}
	Dafermos, C.M.: Hyperbolic Conservation Laws in Continuum Physics, 4 edn.
	\newblock Springer (2016)
	
	\bibitem{dls09}
	DeLellis, C., Sz{\'{e}}kelyhidi Jr., L.: The euler equations as a differential
	inclusion.
	\newblock Ann. of Math. (2) \textbf{170}(3), 1417--1436 (2009)
	
	\bibitem{dls10}
	DeLellis, C., Sz{\'{e}}kelyhidi Jr., L.: On admissibility criteria for weak
	solutions of the euler equations.
	\newblock Arch. Ration. Mech. Anal. \textbf{195}(1), 225--260 (2010)
	
	\bibitem{feir15}
	Feireisl, E., Kreml, O.: Uniqueness of rarefaction waves in multidimensional
	compressible euler system.
	\newblock J. Hyperbolic Differ. Equ. \textbf{12}(3), 489--499 (2015)
	
	\bibitem{leveque}
	LeVeque, R.J.: Finite-Volume Methods for Hyperbolic Problems.
	\newblock Cambridge (2004)
	
\end{thebibliography}


\end{document}